\newtheorem{theorem}{Theorem}
\numberwithin{theorem}{section}
\newtheorem{lemma}[theorem]{Lemma}
\newtheorem{corollary}[theorem]{Corollary}
\newtheorem{proposition}[theorem]{Proposition}
\newtheorem*{remark}{Remark}
\begin{document}

\title{Canard-like phenomena in piecewise-smooth Van der Pol systems}
\date{\today}

\author{Andrew Roberts}
\affiliation{Department of Mathematics, University of North Carolina}
\author{Paul Glendinning}
\affiliation{Department of Mathematics, University of Manchester}

\begin{abstract}
We show that a nonlinear, piecewise-smooth, planar dynamical system can exhibit canard phenomena.  Canard phenomena in nonlinear, piecewise-smooth systems can be qualitatively more similar to the phenomena in smooth systems than piecewise-linear systems, since the nonlinearity allows for canards to transition from small cycles to canards ``with heads."  The canards are born of a bifurcation that occurs as the slow-nullcline coincides with the splitting manifold.  However, there are conditions under which this bifurcation leads to a phenomenon called super-explosion, the instantaneous transition from a globally attracting periodic orbit to relaxations oscillations.  Also, we demonstrate that the bifurcation---whether leading to canards or super-explosion---can be subcritical.  
\end{abstract}

\maketitle

{\bf Fast/slow systems arise in many physical applications.  In these systems, special trajectories that remain near repelling slow manifolds for $\mathcal{O}(1)$ time, called canards, can arise in exponentially small parameter ranges.  They are born of a Hopf bifurcation in systems with an `S'-shaped fast nullcline, and always pass near one of the fold points of the nullcline.  The nature of the canard cycle is determined by the local dynamics near the fold as well as the global dynamics of the full system.  In a piecewise-smooth system the fast nullcline may not be the graph of a differentiable function, so the local extrema might be corners.  We call the nullcline `2'-shaped or `Z'-shaped in the event of one or two corners, respectively.  At a corner the bifurcation and local dynamics can be completely different from what occurs at a smooth fold.  We find conditions under which planar piecewise-smooth fast/slow systems exhibit canard cycles due to a non-smooth `Hopf-like' bifurcation.  If the conditions are not met, the bifurcation causes the system to jump instantaneously from having a stable equilibrium to relaxation oscillations.  This bifurcation is called a super-explosion, and we also find conditions under which the system undergoes a subcritical super-explosion---that is, a stable equilibrium and relaxation oscillation exist simultaneously.  Finally, we apply the results in a physical setting, examining a variant of Stommel's ocean circulation model. }

\section{Introduction}
In dynamical systems, a canard is a trajectory of a fast/slow system that remains near a repelling slow manifold for $\mathcal{O}(1)$ time.  In smooth, planar systems, a Hopf bifurcation may occur when the slow nullcline transversely intersects the fast nullcline, also called the {\it critical manifold}, near a fold (i.e., local extremum).  If the fast nullcline is `S'-shaped, the Hopf cycles will grow to become relaxation oscillations.  The transition from Hopf cycles to relaxation oscillations happens in an exponentially small parameter range, and the phenomenon is called a {\it canard explosion}.

The theory used to analyze fast/slow systems is called geometric singular perturbation theory (GSPT).  For an introduction to GSPT, we direct the reader to the paper by Jones \cite{gsp}.  The basics of GSPT break down at fold points of the critical manifold because the fast and slow dynamics become tangent (i.e., there is no separation of time scales locally).  If the fast and slow nullclines do not intersect at a fold, then the fold point is a singularity of the reduced problem.  At a Hopf bifurcation, however, the fold point becomes a removable singularity of the reduced problem and is called a {\it canard point} \cite{ksGSP}.  In the singular limit, a canard point allows trajectories to cross from a stable branch of the critical manifold to an unstable branch of the critical manifold (or vice versa).  If the system is no longer smooth, but instead only piecewise-smooth, the analog of a canard point may no longer be a removable singularity.  


We will consider a nonlinear, piecewise-smooth, Li\'{e}nard system of the form
\begin{equation}
	\label{genIntro}
	\begin{array}{l}
		\dot{x} = -y + F(x) \\
		\dot{y} = \epsilon ( x - \lambda)
	\end{array}
\end{equation}
where $$ F(x) = \left\{ \begin{array}{ll}
	g(x) & x\leq 0 \\
	h(x) & x \geq 0
	\end{array} \right. $$ 
with $g,h \in C^k, \ k \geq 1$, $g(0) = h(0) =0$, $g'(0) < 0$ and $h'(0) > 0$, and we assume that $h$ has a maximum at $x_M > 0$.  The critical manifold 
$$ N_0 = \{ y = F(x) \} $$ is `2'-shaped with a smooth fold at $x_M$ and a corner (i.e., nonsmooth fold) at $x = 0$.  An example of such a critical manifold is shown in Figure \ref{fig:2m0}.   The line $x=0$ where $F(x)$ is not differentiable is called the {\it splitting line}. 

When $\lambda = 0$ in \eqref{genIntro}, the slow nullcline passes through the corner of the critical manifold creating the analog of a canard point.  There is still a `Hopf-like' bifurcation at $\lambda=0$, with a stable equilibrium existing for $\lambda < 0$ and a stable periodic orbit for $\lambda > 0,$ however the nonsmooth `canard point' is no longer a removable singularity of the reduced problem.

\begin{figure}[t]
	\begin{center}
		\includegraphics[width=0.5\textwidth]{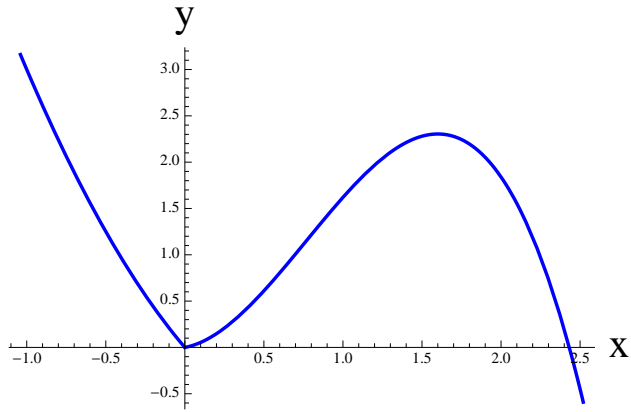}
			\caption{Example of a '2' shaped critical manifold.}
			\label{fig:2m0}
	\end{center}
\end{figure}

Nonsmooth systems are often interesting in two ways: (1) the similarities they share with smooth systems or (2) the ways they differ from smooth systems.  This paper addresses both of those issues with regard to systems of the form \eqref{genIntro}.  First, we find conditions under which \eqref{genIntro} exhibits canard phenomena similar to its smooth counterpart.  Second, we describe the dynamics when those conditions are not met.

In smooth, planar systems there have been three methods used to analyze the canard phenomenon.  It was first studied by Benoit {\it et al.} \cite{benoitEA} using nonstandard analysis.  Then Eckaus examined canards through the lens of matched asymptotic expansions \cite{eck}.  More recently, the popular mechanism for analyzing canards has been a combination of blow-up techniques and dynamical systems.  This idea was introduced by Dumortier and Roussarie \cite{dumort96} and later utilized by Krupa and Szmolyan \cite{ksGSP}, \cite{ksRO}.  Blow-up techniques are particularly important to the examination of canards in higher dimensions where canard phenomena are robust \cite{mmosurvey}.  While canard cycles only exist for an exponentially small parameter range in planar systems, the persistence of canard trajectories in higher dimensions has cast them in a new light.  This new perspective has generated increased interest in canards over the last decade.

Recently, mathematicians have begun to consider the possibility of canard-like phenomena in nonsmooth systems \cite{pwlCanards}, \cite{prohens13}, \cite{rotstein}, however the work to this point has been restricted to piecewise-linear systems.  The smooth canard explosion phenomenon involves an interplay of local dynamics near a canard point and global dynamics leading to a periodic orbit.  The examination of piecewise-linear systems is a large step in understanding the local dynamics near a nonsmooth fold, however it cannot account for the nonlinear global behavior.  Indeed, nonlinearity is required for the transition from small canard cycles to the larger canards with heads (see Figure \ref{fig:smoothCanards}).  We generalize the analysis to nonlinear, piecewise-smooth systems of the form \eqref{genIntro}. 

\begin{figure}[t]
        \centering
        \begin{subfigure}[b]{0.45\textwidth}
                \includegraphics[width=\textwidth]{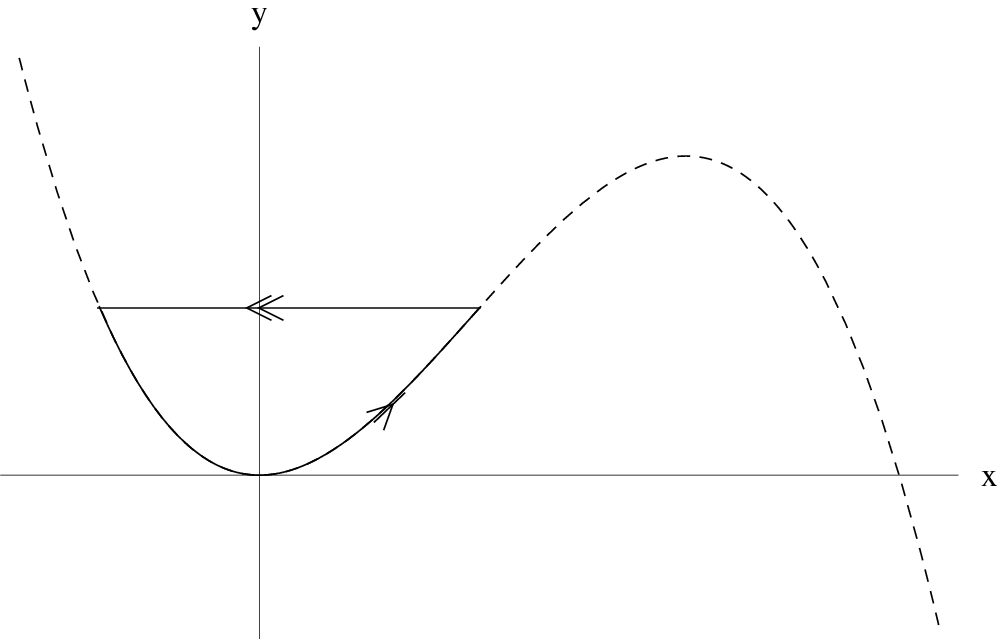}
                \caption{Singular canard cycle $\Gamma(s)$,  $s \in (0, y_M).$ }
                \label{fig:smoothNoHead}
        \end{subfigure}
        ~ 
        \begin{subfigure}[b]{0.45\textwidth}
                \includegraphics[width=\textwidth]{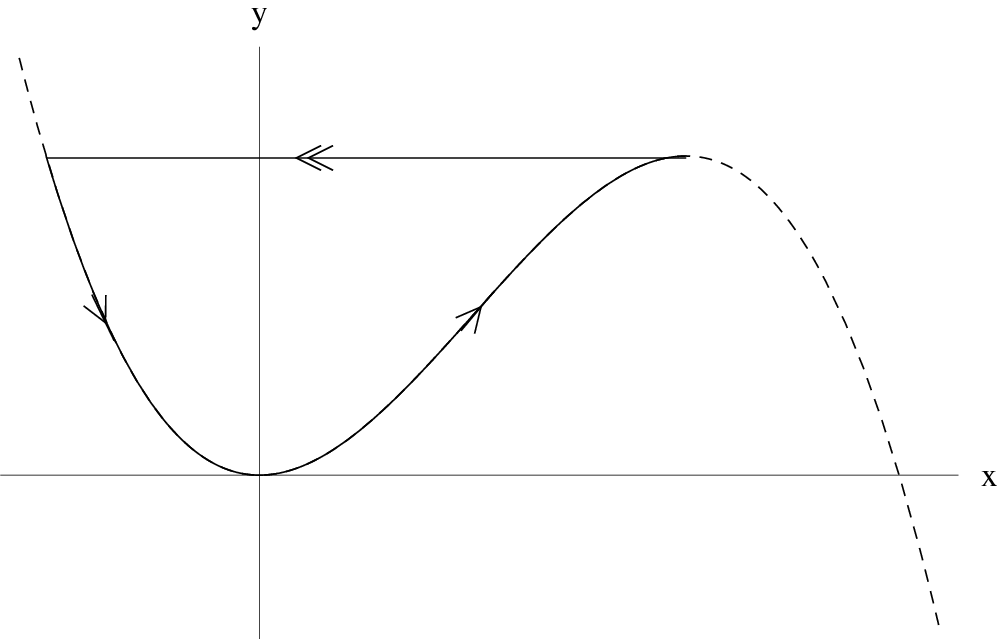}
                \caption{Singular canard cycle $\Gamma(s)$,  $s = y_M.$}
                \label{fig:smoothMax}
        \end{subfigure}
        \\ \vspace{0.4in}
                \centering
        \begin{subfigure}[b]{0.45\textwidth}
                \includegraphics[width=\textwidth]{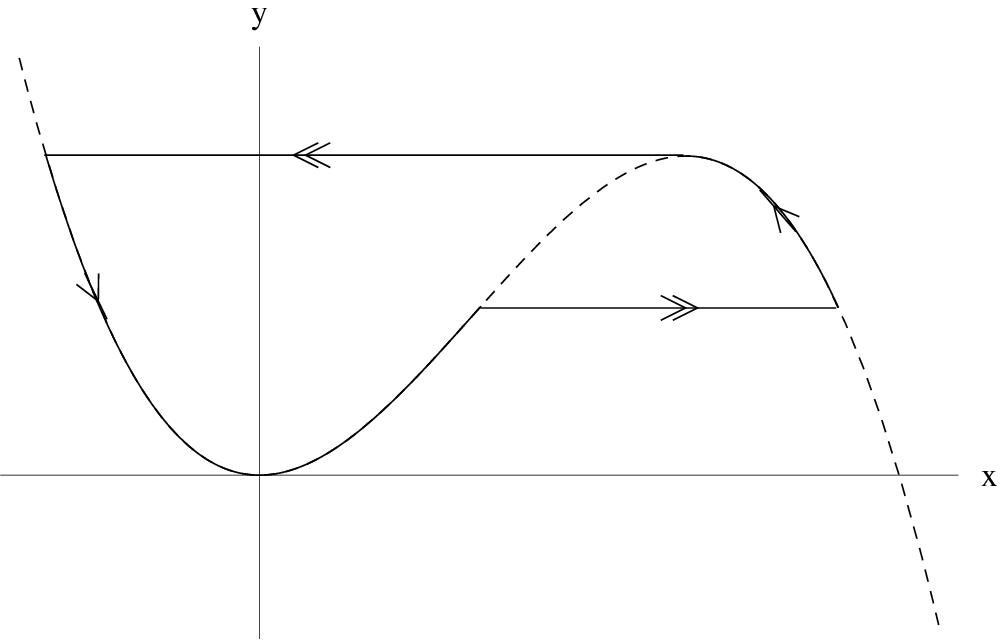}
                \caption{Singular canard cycle $\Gamma(s)$, $s \in (y_M, 2 y_M).$}
                \label{fig:smoothHead}
        \end{subfigure}
        ~ 
        \begin{subfigure}[b]{0.45\textwidth}
                \includegraphics[width=\textwidth]{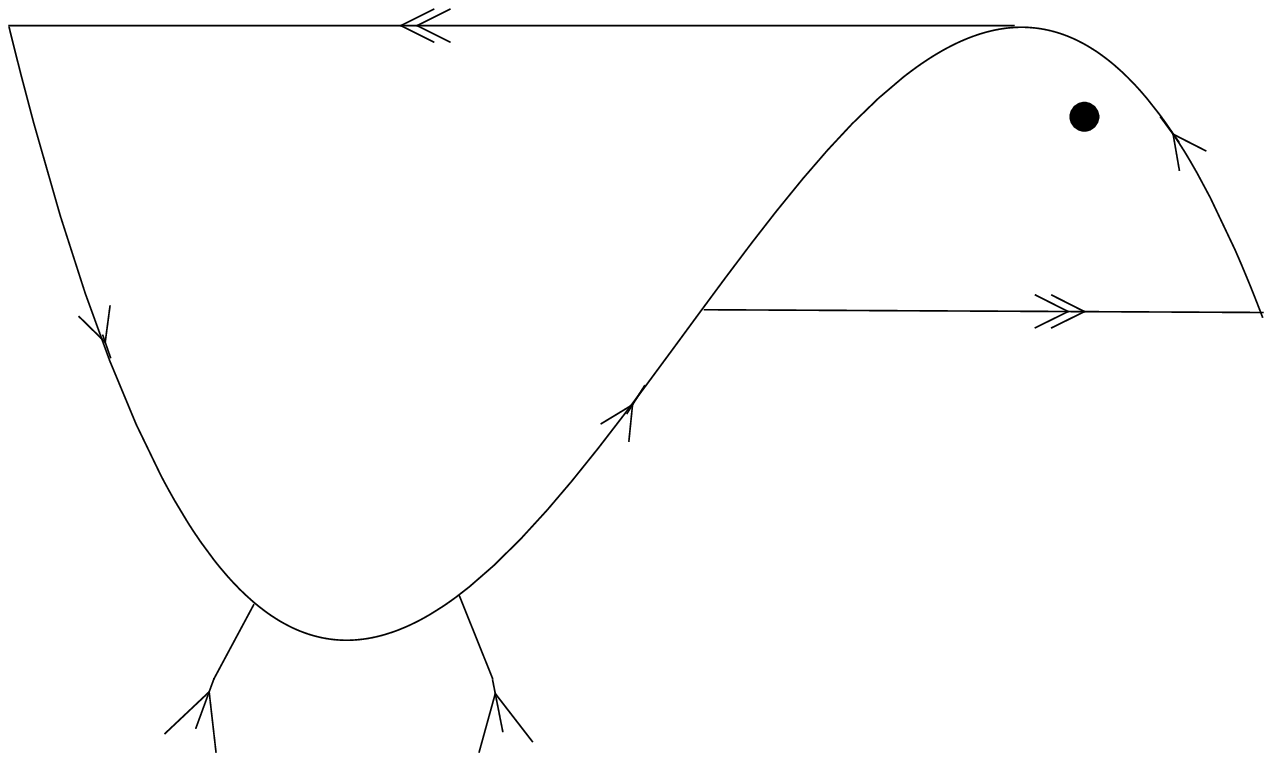}
                \caption{A duck!}
                \label{fig:smoothDuck}
        \end{subfigure}
        \caption{Examples of the three types of singular canard cycles: (a) canard without head, (b) maximal canard, and (c) canard with head.}
        \label{fig:smoothCanards}      
\end{figure}

In \cite{pwlCanards}, Desroches {\it et al.} perform the local analysis near a nonsmooth canard point in a piecewise-linear system.  We generalize their results to piecewise-smooth systems of the form \eqref{genIntro}, allowing for nonlinear effects.  Desroches {\it et al.} also discover a new phenomenon called a {\it super-explosion}, which is unique to piecewise-smooth systems.  Under a super-explosion, the system bifurcates from having a stable equilibrium instantaneously into relaxation oscillations, forgoing the small canard cycles.  We find this phenomenon in the nonlinear, piecewise-smooth case as well, however our more general setting allows us to demonstrate a sub-critical super-explosion---the simultaneous existence of a stable equilibrium and a stable relaxation oscillation.  

The method of proof employs a {\it shadow system}, or smooth system that agrees with \eqref{genIntro} on one side of the splitting line.  In most cases we will use 
\begin{equation}
	\label{shadIntro}
	\begin{array}{l}
		\dot{x} = -y + h(x) \\
		\dot{y} = \epsilon ( x - \lambda)
	\end{array}
\end{equation}
as our shadow system.  For $x > 0$, the systems \eqref{genIntro} and \eqref{shadIntro} agree.  It is often useful to consider a trajectory with initial conditions in the right half-plane, following the flow until the trajectory hits the splitting line $\{ x=0 \}$.  At this point, the vector fields no longer coincide, so the trajectory will behave differently in \eqref{genIntro} than it will in \eqref{shadIntro}.  We will compare the different behavior for $x < 0$, utilizing what is known about canard cycles in smooth systems. 

In section II we state and prove the main results about the existence or lack of canard cycles in systems of the form \eqref{genIntro}.  Section III discusses a physical application in which canard cycles arise in a nonsmooth system.  Finally, we conclude with a discussion in section IV.

\section{Results}
\subsection{Shadow system lemma}
As we state and prove our main results, we will consider systems of the form 
\begin{equation}
	\label{general}
	\begin{array}{l}
		\dot{x} = -y + F(x) \\
		\dot{y} = \epsilon ( x - \lambda)
	\end{array}
\end{equation}
where $$ F(x) = \left\{ \begin{array}{ll}
	g(x) & x\leq 0 \\
	h(x) & x \geq 0
	\end{array} \right. $$ 
with $g,h \in C^k, \ k \geq 1$, $g(0) = h(0) =0$, $g'(0) < 0$ and $h'(0) > 0$, and we assume that $h$ has a maximum at $x_M > 0$.  The critical manifold 
$$ N_0 = \{ y = F(x) \} $$ is `2'-shaped with a smooth fold at $x_M$ and a corner along the splitting line $x = 0$.   We denote
\begin{align*}
	M^l &= \{ (x, F(x) ): x<0 \} =  \{ (x, g(x) ): x<0 \} \\
	M^m &= \{ (x, F(x)) : 0 < x < x_M \} =  \{ (x, h(x)) : 0 < x < x_M \}  \\
	M^r &= \{ (x, F(x)): x > x_M \} = \{ (x, h(x)): x > x_M \}.
\end{align*}
Since $h'(0)$ exists, $h(x)$ can be extended in the left half-plane, and at least locally, we have that $h(x) \leq g(x)$ for $x<0$.  We assume that $h(x) \leq g(x)$ for all $x<0$ and define the shadow system to be
\begin{equation}
	\label{shadow}
	\begin{array}{l}
		\dot{x} = -y + h(x) \\
		\dot{y} = \epsilon ( x - \lambda).
	\end{array}
\end{equation}
Since there are two distinct types of bifurcation that can lead to the formation of periodic orbits---one at the smooth fold and one at the corner---we will consider each of those cases separately.  In both cases, we will consider the relative distance from the origin of trajectories in the nonsmooth system \eqref{general} and shadow system \eqref{shadow} that enter the left half-plane $x < 0$ at the same point $(0,y_*)$.  The following lemma describes the relationship of these trajectories.  

\begin{lemma}
	\label{bound}
	Consider the trajectory $\gamma_n (t) = (x_n (t) , y_n (t) )$ of \eqref{general} that crosses the $y$-axis entering the left half-plane $x<0$ at $\gamma_n (0) = (0,y_c)$.  Also consider the analogous trajectory $\gamma_s$ of the shadow system \eqref{shadow}.  Then, the distance from the origin of $\gamma_n$ is bounded by that of $\gamma_s$.  
\end{lemma}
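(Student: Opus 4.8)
The plan is to compare the two trajectories $\gamma_n$ and $\gamma_s$ in the left half-plane $x<0$ using the fact that they start at the same point $(0,y_c)$ but are governed by vector fields that differ only in the $\dot x$-equation, with $F(x)=g(x)\ge h(x)$ there. The key observation is that on $x<0$ both systems share the identical slow equation $\dot y = \epsilon(x-\lambda)$, so as long as the two trajectories have the same $x$-coordinate they experience the same vertical drift, while the horizontal component of the nonsmooth flow is $-y+g(x)$, which is at least as large as the shadow flow's $-y+h(x)$. Thus, heuristically, $\gamma_n$ is always pushed at least as far to the right (or equivalently, not as far to the left) as $\gamma_s$, which should force $\gamma_n$ to remain closer to the origin.

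To make this precise I would first parametrise both trajectories by their $y$-coordinate rather than by time, which is legitimate on the portion of the orbit where $\dot y$ has a fixed sign (i.e. while $x<\lambda$, which holds near the canard point where $\lambda$ is small and the relevant part of the orbit has $x<0<\lambda$ or at least $x$ bounded away from $\lambda$). Writing $x=X_n(y)$ and $x=X_s(y)$ for the two orbits, the chain rule gives
\begin{equation}
\label{eq:Xode}
\frac{dX}{dy} = \frac{-y+F(X)}{\epsilon(X-\lambda)},
\end{equation}
with $F=g$ for the nonsmooth system and $F=h$ for the shadow system, and both satisfy $X(y_c)=0$. Since $g(x)\ge h(x)$ on $x<0$ and the denominator $\epsilon(X-\lambda)$ is negative there, the right-hand side of \eqref{eq:Xode} for the nonsmooth system is $\le$ that for the shadow system at equal $(X,y)$; as $y$ decreases from $y_c$ this comparison of the ODE right-hand sides (together with a standard differential-inequality / Gronwall-type argument) yields $X_n(y)\ge X_s(y)$, i.e. the nonsmooth orbit stays to the right of the shadow orbit. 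Because the shadow orbit lies in $x<0$, being to its right means being closer to the $y$-axis, and combined with the identical $y$-evolution this gives the claimed bound on the distance from the origin.

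The main obstacle I anticipate is handling the places where the reparametrisation by $y$ breaks down, namely where a trajectory's $x$-coordinate equals $\lambda$ (so $\dot y = 0$) or where the orbit turns around — and more seriously, dealing with what happens as the trajectories leave the region $x<0$ or approach the splitting line again, since the lemma as stated concerns the full excursion into the left half-plane. One must check that the comparison is not destroyed at the boundary: the nonsmooth orbit, being further right, will re-cross $x=0$ no later (in the ordering along the orbit) and at a $y$-value controlled by the shadow orbit, and one needs the monotonicity of $y$ (or a careful case analysis of the return to $x=0$) to conclude. A secondary technical point is justifying that "distance from the origin" is genuinely controlled — here it suffices that on $x<0$ the only way to be far from the origin is to have large $|x|$ or large $|y|$, and both are dominated once $X_n\ge X_s$ and the $y$-ranges are nested. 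I would organise the write-up as: (i) set up the $y$-parametrised ODE \eqref{eq:Xode} and note the common slow dynamics; (ii) prove the differential inequality $X_n\ge X_s$; (iii) translate this into the distance bound, treating the turning point at $x=\lambda$ and the return to the splitting line as the two case distinctions.
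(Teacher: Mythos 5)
Your approach is genuinely different from the paper's, and the difference is worth noting. The paper does not reparametrise by $y$ at all: it introduces the radial function $R(x,y)=\tfrac{1}{2}(x^2+y^2)$, computes
\[
\dot R_n - \dot R_s = x\,[g(x)-h(x)] \le 0 \quad\text{for } x\le 0,
\]
and concludes that on any curve that is a shadow trajectory, the nonsmooth vector field points radially inward, so $\gamma_n$ can never cross $\gamma_s$ outward. Combined with a second-order Taylor expansion at $(0,y_c)$ (the vector fields agree to first order there, since $g(0)=h(0)=0$) to establish that $\gamma_n$ enters \emph{strictly} inside $\gamma_s$, this gives the containment without any time-change. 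Your plan instead writes the orbits as graphs $x=X(y)$, derives the scalar ODE $dX/dy = (-y+F(X))/(\epsilon(X-\lambda))$, and compares right-hand sides. Both are sound comparison arguments, but yours buys less generality: the $y$-parametrisation requires $\dot y \ne 0$ on the whole excursion, i.e.\ $X<\lambda$ throughout, which forces $\lambda>0$ (or at least $\lambda$ above the leftmost point reached). The lemma itself makes no such hypothesis, and the paper's radial argument is $\lambda$-independent. In the places the lemma is actually invoked ($\lambda\in(0,x_M)$ in Theorems \ref{fold} and \ref{corner}(i)) your restriction is satisfied, so your route would suffice for the paper's purposes, but as a proof of the lemma as stated it is incomplete on exactly the point you flag and do not resolve.

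Two further technical points you should tighten if you pursue your route. First, the sign bookkeeping: you correctly note $g\ge h$ on $x<0$ and that the denominator $\epsilon(X-\lambda)$ is negative there, so $dX_n/dy \le dX_s/dy$ at equal $(X,y)$; since $y$ decreases along the orbit, this is the right direction to yield $X_n\ge X_s$, but you should make the time-reversal (or $\tau = y_c - y$) explicit before invoking a comparison theorem. Second, the degenerate start: at $(0,y_c)$ the two right-hand sides coincide (both equal $y_c/(\epsilon\lambda)$ since $g(0)=h(0)=0$), so the comparison is initially an equality. You need the strict inequality $g(X)>h(X)$, forced for small $X<0$ by $g'(0)<h'(0)$, to kick in and separate the orbits — this is precisely what the paper's second-order Taylor expansion accomplishes, and your write-up should supply the analogous step rather than appeal to ``a standard Gronwall-type argument'' at that juncture.
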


\begin{proof}
Define
$$R (x,y) = \frac{x^2 + y^2}{2}.$$  
$R$ will evolve differently in the nonsmooth and shadow systems when $x < 0$, so we denote $\dot{R}_n (x,y) $ and $\dot{R}_s (x,y) $ as the time derivative of $R$ in the nonsmooth and shadow systems, respectively.  Then we have
\begin{align*}
	\dot{R}_n (x,y) &= x( g(x) - y ) + \epsilon y (x- \lambda) \\
	\dot{R}_s (x,y) &= x( h(x) - y ) + \epsilon y (x- \lambda).
\end{align*}
Therefore, at a given point $(x,y)$ we have
$$\dot{R}_n (x,y) - \dot{R}_s (x,y)  = x [ g(x) - h(x)] \leq 0, $$
since $g(x) \geq h(x)$ and $x \leq 0$, where equality only holds if $x = 0$.  Thus, $ \gamma_n$ can never cross  $\gamma_s $ moving away from the origin for $x < 0$ (i.e., the vector field of \eqref{general} points ``inward'' on the trajectory $\gamma_s$). Since $\gamma_n$ and $\gamma_s$ coincide at $(0,y_c)$, it suffices to show that $R (\gamma_n (\delta t) ) < R (\gamma_s (\delta t) )$ for $\delta t > 0$ sufficiently small.  Since the vector fields of \eqref{general} and \eqref{shadow} coincide on the $y$-axis, we must use second order terms:
\begin{align*}
	\left( \begin{array}{c}  x_n (\delta t) \\ y_n (\delta t)  \end{array} \right)&=
		\left( \begin{array}{c} 
			0 + \dot{x}_n \delta t + \ddot{x}_n \delta t^2 + \ldots \\
			y_c + \dot{y}_n \delta t + \ddot{y}_n \delta t^2 + \ldots \end{array}  \right) \\
	\left( \begin{array}{c}  x_s (\delta t) \\ y_s (\delta t)  \end{array} \right)&=
		\left( \begin{array}{c} 
			0 + \dot{x}_s \delta t + \ddot{x}_s \delta t^2 + \ldots \\
			y_c + \dot{y}_s \delta t + \ddot{y}_s \delta t^2 + \ldots \end{array}  \right).
\end{align*}

We have already seen that $(x_s(t) ,y_s (t) )$ and $(x_n (t) ,y_n (t) )$ agree for the $0^{th}$ and $1^{st}$ order terms.  Therefore, the important terms are
\begin{equation*}
	\begin{array}{lll}
		\ddot{x}_n =&	 - \dot{y}_n + g'(0) \dot{x}_n   	& = \epsilon \lambda - g'(0) y_c  		\\
		\ddot{x}_s =& 	-\dot{y}_s + h'(0) \dot{x}_s 	&= \epsilon \lambda -h'(0) y_c 			
	\end{array}
\end{equation*}		

and
\begin{equation*}
	\begin{array}{lll}
		\ddot{y}_n =& 	\epsilon \dot{x}_n  			& = -\epsilon y_c  					\\
		\ddot{y}_s =&	\epsilon \dot{x}_s 			&= - \epsilon y_c.
	\end{array}
\end{equation*}
Both vector fields point left ( $\dot{x} < 0$ ) if and only if $ y > 0$, so $y_c$ must be positive.  Since $\ddot{y}$ is the same in both systems, we look at the $\ddot{x}$ terms.  Since $-g'(0) > 0 > -h'(0)$, the smooth trajectory of the shadow system moves further left than the nonsmooth trajectory for the same vertical change as in Figure \ref{fig:arrows}.  This shows that the trajectory of the nonsmooth system enters immediately below (i.e., nearer to the origin) the trajectory of the shadow system.  Once $\gamma_n$ is nearer to the origin than $\gamma_s$, it is bounded by $\gamma_s$, proving the result.
 \begin{figure}[t]
	\begin{center}
		\includegraphics[width=0.6\textwidth]{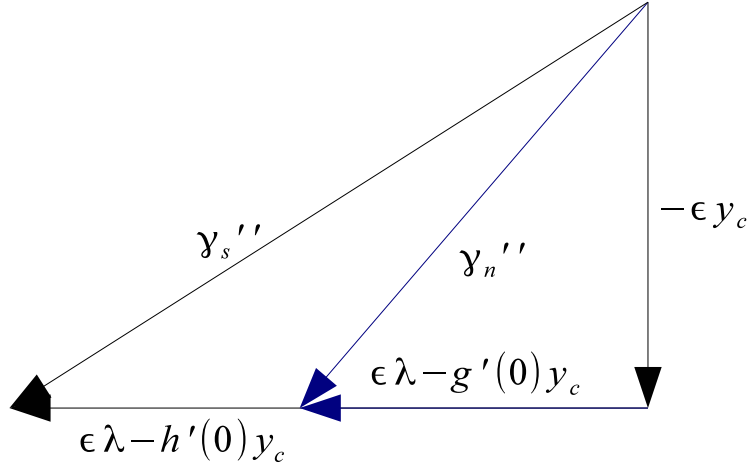}
			\caption{Relative slopes of vectors as discussed in the proof of Lemma \ref{bound}. }
			\label{fig:arrows}
	\end{center}
\end{figure}
\end{proof}

\begin{corollary}
	\label{lemCor}
	Consider the trajectory $\gamma_n (t) = (x_n (t) , y_n (t) )$ of \eqref{general} that crosses the $y$-axis entering the left half-plane $x<0$ at $\gamma_n (0) = (0,y_c)$.  Also consider the analogous trajectory $\gamma_S$ of the system
	\begin{equation}
	\label{generalS}
	\begin{array}{l}
		\dot{x} = -y + \tilde{F}(x) \\
		\dot{y} = \epsilon ( x - \lambda)
	\end{array}
\end{equation}
where $$ \tilde{F}(x) = \left\{ \begin{array}{ll}
	\tilde{g}(x) & x\leq 0 \\
	h(x) & x \geq 0
	\end{array} \right. $$ 
and $\tilde{g}'(0) > g'(0)$. Then, the radial distance from the origin of $\gamma_n$ is bounded by that of $\gamma_S$.  
\end{corollary}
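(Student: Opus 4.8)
The plan is to re-run the argument of Lemma~\ref{bound} with the smooth shadow system \eqref{shadow} replaced by \eqref{generalS}, so that $\tilde g$ takes over the role that $h$ played on $x<0$. Note that \eqref{generalS} is itself of the form \eqref{general}, with the \emph{same} right-hand branch $h$ but a different left branch $\tilde g$ (with $\tilde g(0)=0$), so this is a genuine re-derivation rather than a black-box application of the lemma. Set $R(x,y)=\frac{x^2+y^2}{2}$ and write $\dot R_n$, $\dot R_S$ for its rate of change along \eqref{general} and \eqref{generalS}. Since the two vector fields agree for $x\ge 0$ and differ only in the $F$-term for $x<0$, the same computation as in the lemma gives, for $x<0$,
\[
\dot R_n(x,y)-\dot R_S(x,y)=x\,[\,g(x)-\tilde g(x)\,],
\]
so everything reduces to the sign of $g(x)-\tilde g(x)$ on the left half-plane.

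From $g(0)=\tilde g(0)=0$ one has $g(x)-\tilde g(x)=(g'(0)-\tilde g'(0))\,x+o(x)$ as $x\to 0^-$, and since $g'(0)-\tilde g'(0)<0$ while $x<0$ this is positive near the corner; as with the standing hypothesis $h\le g$ attached to the shadow system, I would take $\tilde g\le g$ on all of $x<0$. Then $\dot R_n-\dot R_S=x[g(x)-\tilde g(x)]\le 0$ for $x\le 0$, with equality only on the splitting line, so along $\gamma_S$ the vector field of \eqref{general} points strictly inward for $x<0$; hence $\gamma_n$ can never cross $\gamma_S$ while moving away from the origin.

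It then remains to check that $\gamma_n$ starts out inside $\gamma_S$. The trajectory enters $x<0$, so at $\gamma_n(0)=\gamma_S(0)=(0,y_c)$ we have $\dot x=-y_c<0$ and therefore $y_c>0$; and since the two vector fields coincide on $\{x=0\}$, the Taylor expansions of $\gamma_n$ and $\gamma_S$ about that common point agree to first order. At second order $\ddot y$ is common to both, while
\[
\ddot x_n=\epsilon\lambda-g'(0)y_c,\qquad \ddot x_S=\epsilon\lambda-\tilde g'(0)y_c,
\]
so $\ddot x_n-\ddot x_S=(\tilde g'(0)-g'(0))\,y_c>0$. Thus, for the same (second-order) vertical displacement, $\gamma_S$ lies further to the left than $\gamma_n$ --- the picture of Figure~\ref{fig:arrows} with $\tilde g'(0)$ in place of $h'(0)$ --- so $\gamma_n$ enters immediately nearer the origin (same height, smaller $|x|$, hence smaller $R$). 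Combined with the no-outward-crossing property this yields $R(\gamma_n(t))\le R(\gamma_S(t))$ for as long as $x<0$, i.e.\ the asserted bound on the radial distances.

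The delicate point is the global inequality $\tilde g\le g$ on $x<0$: the hypothesis $\tilde g'(0)>g'(0)$ delivers it only near the corner, whereas the radial comparison needs it (or a surrogate) everywhere $\gamma_S$ travels in the left half-plane. If one declines to postulate $\tilde g\le g$ globally, the remedy is to confine the no-crossing argument to the region where $g\ge\tilde g$ and argue separately that the relevant trajectories return to the splitting line before leaving it; everything else is a line-by-line transcription of the proof of Lemma~\ref{bound}.
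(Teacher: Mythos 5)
Your argument matches the paper's proof, which simply observes that Lemma~\ref{bound} used only $g'(0)<h'(0)$ (not that the slopes have opposite signs) and reruns the same $\dot R$-comparison and second-order Taylor argument with $\tilde g$ in the role of $h$. The caveat you flag about needing $\tilde g\le g$ globally on $x<0$ is real, but the paper inherits that hypothesis silently, just as it imposes the standing convention $h\le g$ before Lemma~\ref{bound} without restating it in the corollary.
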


\begin{proof}
The proof of Lemma \ref{bound} only used that $g'(0) < h'(0)$, not requiring that $g'(0)$ and $h'(0)$ had different signs.  Thus, using \eqref{general} as the shadow system, a similar proof to that of Lemma \ref{bound} gives the result.
\end{proof}

\subsection{Canards at the smooth fold}
\begin{theorem}
	\label{fold}
	Fix $0 < \epsilon \ll 1$.  In system \eqref{general}, assume $g(0)=0 = h(0)$, $h'(0) > 0$, and $g'(0) < 0$.  Then there is a Hopf bifurcation when $\lambda=x_M$.  
		\begin{enumerate}[(i)]
			\item If the Hopf bifurcation is supercritical, then it will produce canard cycles.  
			\item If the Hopf bifurcation is subcritical, then it will produce relaxation oscillations.
		\end{enumerate}
	\end{theorem}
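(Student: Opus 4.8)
The plan is to confine the bifurcation analysis to a neighborhood of the smooth fold $x_M>0$, where \eqref{general} coincides with the smooth shadow system \eqref{shadow}, and then to import the classical theory of the canard explosion, using Lemma \ref{bound} only to control the global return through the splitting line. First I would locate the equilibrium: $\dot y=0$ forces $x=\lambda$ and then $\dot x=0$ forces $y=F(\lambda)$, so $(\lambda,F(\lambda))$ is the unique equilibrium, and for $\lambda$ near $x_M$ it lies in $\{x>0\}$, where $F\equiv h$ and the system is $C^k$. The Jacobian there is $\begin{pmatrix} h'(\lambda) & -1\\ \epsilon & 0\end{pmatrix}$, with trace $h'(\lambda)$ and determinant $\epsilon>0$. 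Since $h$ has a nondegenerate maximum at $x_M$, the trace vanishes precisely at $\lambda=x_M$, where the eigenvalues are $\pm i\sqrt{\epsilon}$; transversality holds because $\tfrac{d}{d\lambda}\operatorname{Re}\mu\big|_{\lambda=x_M}=\tfrac12 h''(x_M)\neq 0$, and $h'(\lambda)^2<4\epsilon$ near $x_M$ keeps the eigenvalues a genuine complex pair. Hence there is an Andronov--Hopf bifurcation at $\lambda=x_M$, with a stable focus for $\lambda>x_M$ and an unstable focus for $\lambda<x_M$. Because the equilibrium and a full neighborhood of it remain in $\{x>0\}$ for $\lambda$ close to $x_M$, the first Lyapunov coefficient coincides with that of the smooth shadow system \eqref{shadow} at its canard point, so ``supercritical'' versus ``subcritical'' is unambiguous and inherited from the smooth problem.

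For part (i), the negative Lyapunov coefficient produces a branch of small stable limit cycles on the side $\lambda<x_M$. The fold $(x_M,y_M)$ of the `2'-shaped critical manifold, together with the slow nullcline $\{x=\lambda\}$ crossing it at $\lambda=x_M$, is exactly a smooth canard point, so I would apply the blow-up analysis of Krupa--Szmolyan \cite{ksGSP,ksRO} (equivalently the classical van der Pol canard analysis): for fixed $0<\epsilon\ll1$, as $\lambda$ decreases through $x_M$ the Hopf cycles grow, within an exponentially thin $\lambda$-window, into canards without head, the maximal canard, and canards with head, the latter entering $\{x<0\}$ and riding the attracting branch $M^l$. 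The local blow-up never meets the splitting line $\{x=0\}$ since it takes place in $\{x>0\}$; the only nonsmooth ingredient is the global return through the corner at the origin, and there Lemma \ref{bound} (and Corollary \ref{lemCor}) trap the nonsmooth trajectory between the origin and the known smooth shadow trajectory, which is enough to close the family of periodic orbits. This yields the claimed canard cycles.

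For part (ii), the positive Lyapunov coefficient makes the bifurcating cycles small and unstable, appearing for $\lambda>x_M$ around the stable focus, so no small stable cycle exists on either side; what must still be produced is a stable relaxation oscillation. I would construct the singular periodic orbit $\Gamma_0$ from the attracting slow branches $M^r$ and $M^l$, the fast fibre leaving the smooth fold $(x_M,y_M)$, and the fast fibre leaving the corner $(0,0)$, which acts as a nonsmooth jump point; Fenichel theory away from the folds, the generic fold passage at $x_M$, and the transversal crossing of $\{x=0\}$ then give a nearby attracting periodic orbit for $0<\epsilon\ll1$ and $\lambda$ in a neighborhood of $x_M$. Alternatively one exhibits an annular trapping region---outer boundary a large curve through which the flow enters, inner boundary a small circle about the unstable focus through which it exits, with no equilibria inside---and applies Poincar\'e--Bendixson, then identifies the resulting cycle with $\Gamma_0$. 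Since the subcritical local normal form forbids any small stable cycle near $\lambda=x_M$, this relaxation oscillation is not continuously connected to the equilibrium through small canards, so on crossing $\lambda=x_M$ the attractor jumps directly from the equilibrium to the relaxation oscillation.

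The linearization and Hopf verification are routine; the genuine difficulty is the global return. The two delicate points are (a) treating the corner at $x=0$ as a nonsmooth fold so that the fast fibres actually close up into $\Gamma_0$---this is precisely where Lemma \ref{bound} does the work, since the nonsmooth trajectory is squeezed between the origin and the already-understood shadow trajectory---and (b) in case (ii), matching the exponentially small Hopf regime at $\lambda=x_M$ to the $\mathcal{O}(1)$ relaxation regime, i.e.\ showing the unstable Hopf branch is not itself the emerging attractor. Both rely on nondegeneracy hypotheses---$h$ has a nondegenerate maximum at $x_M$, and the fast fibres from $x_M$ and from the origin land on the interiors of $M^l$ and $M^r$---which I would state explicitly at the start of the proof.
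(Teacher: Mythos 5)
Your plan shares the paper's essential skeleton: verify the smooth Hopf bifurcation at $\lambda=x_M$ by linearizing at the unique equilibrium $(\lambda,F(\lambda))\in\{x>0\}$ (trace $h'(\lambda)$ vanishing at $x_M$, determinant $\epsilon>0$), note that criticality is inherited from the shadow system since a full neighborhood of the equilibrium lies in $\{x>0\}$, and then use Lemma~\ref{bound} to control the return through $\{x<0\}$. Where you diverge is in how the periodic orbit is actually closed. The paper does not invoke Krupa--Szmolyan blow-up inside this proof at all: it observes that for $\lambda_G<\lambda<x_M$ the shadow cycle $\Gamma^s$ lies entirely in $\{x>0\}$ and so equals the nonsmooth cycle, and past the grazing value $\lambda_G$ it builds an explicit positively invariant region $V$ whose boundary is the nonsmooth trajectory $\gamma_n$ launched at $(0,y_d)$ (which coincides with $\Gamma^s$ until its first return to the $y$-axis at $(0,y_c)$ and thereafter, by Lemma~\ref{bound}, stays strictly inside $\Gamma^s$, returning at $(0,y_d^*)$ with $y_d^*>y_d$) together with a $y$-axis segment, then applies Poincar\'e--Bendixson. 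Your version gains a more detailed picture of the explosion (headless/maximal/with-head canards and the exponentially thin window), at the cost of heavier machinery and a slight vagueness: Krupa--Szmolyan by itself describes only the shadow system, so when you say Lemma~\ref{bound} ``traps'' the nonsmooth trajectory and that this ``is enough to close the family,'' you are gesturing at exactly the trapping region $V$ plus Poincar\'e--Bendixson, which deserves to be made explicit---your own alternative trapping-region route in part (ii) is in fact the paper's route in both parts. For (ii) the paper is terse (``the proof is the same'' with the caveat that no shadow cycle sits inside $\{x>0\}$), whereas you add a GSPT construction of the singular orbit $\Gamma_0$; both are fine. Your explicit hypothesis $h''(x_M)\neq0$ is correct to flag---the paper assumes only that $h$ has a maximum at $x_M$, leaving the nondegeneracy needed for Hopf transversality implicit.
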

	
\begin{remark}
In general, Hopf bifurcations leading to canard explosions may not occur precisely at a fold.  In our case, since the slow nullcline is a vertical line (i.e., the $\dot{y}$ equation has no $y$ dependence), the bifurcation happens when the slow nullcline intersects the fold.
\end{remark}

\begin{proof}
Direct calculation shows that \eqref{general} undergoes a Hopf bifurcation at $\lambda = x_M$.  The criticality of the Hopf bifurcation is determined as usual--the formula can be found in \cite{glendinning1994stability}, for example--since it is a smooth Hopf bifurcation.  Assume bifurcation is supercritical.  Let $\Gamma^n_\epsilon (\lambda)$ denote the family of stable periodic orbits in the nonsmooth system, and $\Gamma^s_\epsilon (\lambda)$ denote the family for the smooth system.    For some $\lambda_G(\epsilon)$, the system undergoes a grazing bifurcation where the stable periodic orbit is tangent to the splitting line $x= 1$.  If $\Gamma^s(\lambda) \subset \{ x > 0 \} $ (which happens for all $\lambda_G < \lambda < x_M$), then $\Gamma^s = \Gamma^n$.  

Beyond the grazing bifurcation (i.e., for $\lambda< \lambda_G$), $\Gamma^s$ crosses the $y$-axis transversely.  Let $y_c > 0$ be the $y$-coordinate of the crossing, and let $y_d < 0 $ be the $y$-coordinate where $\Gamma^s$ re-enters the right half-plane.  Define $\gamma_n$ to be the trajectory of \eqref{general} through the point $(0,y_d)$.  Without loss of generality, assume $\gamma_n (0) = (0,y_d)$.  Then there exists a time $t_c > 0$ so that $\gamma_n (t_c) = (0,y_c)$ and for all $t \in [ 0 , t_c ]$, $\gamma_n$ coincides with $\Gamma^s$.  By Lemma \ref{bound} for all $t > t_c$, $\gamma_n$ must be contained in the interior of $\Gamma^s$.  In particular, there exists a pair $(t_d, y_d^*)$ with $t_d > t_c$ and $y_d^* > y_d$, such that $\gamma_n(t_d) = (0,y_d^*)$.  Let $V$ be the set enclosed by the closed curve
 $$\partial V = \{ \gamma_n(t) : 0 \leq t_d \} \cup \{ (0,y): y_d^* < y_c\}$$
 
 \begin{figure}[t]
	\begin{center}
		\includegraphics[width=0.6\textwidth]{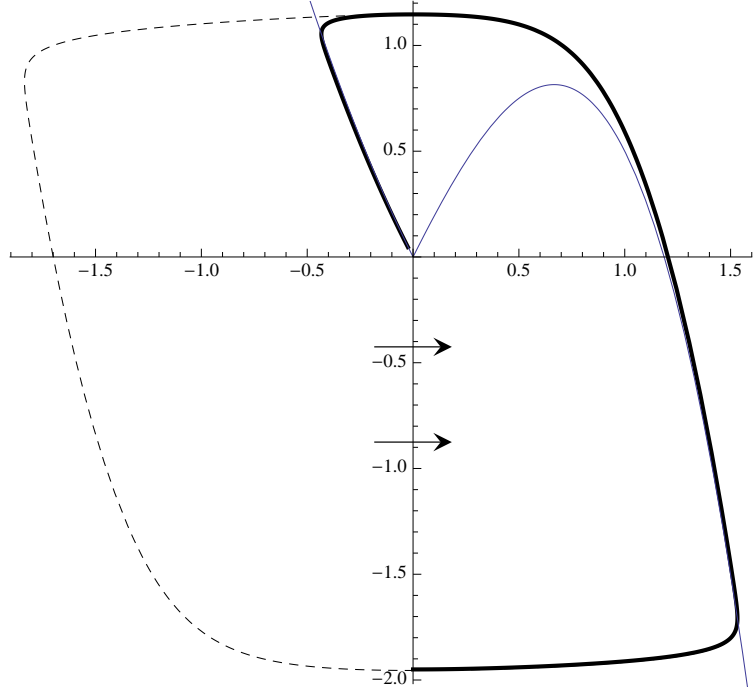}
			\caption{The set $V$ for a given $\lambda$.  The dashed curve is the periodic orbit of the shadow system.  The bold curve is the trajectory in the nonsmooth system.  $V$ is the set positively invariant set enclosed by the bold curve and the $y$-axis.}
			\label{fig:v}
	\end{center}
\end{figure}

The vector field of \eqref{general} is either tangent to or pointing inward on $ \partial V$, so the Poincar\'{e}-Bendixson theorem guarantees the existence of a stable periodic orbit on the interior of $V$ (see Figure \ref{fig:v}).  This proves (i).

The proof is the same in the case of a subcritical bifurcation, however there may not be any periodic orbits $\Gamma^s(\lambda)$ that are contained entirely in the right half-plane.
\end{proof}

There is a simple corollary to Theorem \ref{fold}.
\begin{corollary}
	\label{foldCor}
	Assume the shadow system \eqref{shadow} exhibits canard cycles (i.e., it satisfies assumptions (A1)-(A4') from \cite{ksRO}.  Then the $\Gamma^n(\lambda)$ are bounded by the stable canard orbits of the shadow system.  
\end{corollary}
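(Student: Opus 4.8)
\emph{Proposed proof.} The plan is to obtain the statement as a direct consequence of Lemma~\ref{bound} and the proof of Theorem~\ref{fold}, combined with the canard-explosion structure of the shadow system \eqref{shadow} supplied by \cite{ksRO}. First I would invoke \cite{ksRO}: under assumptions (A1)--(A4') the shadow system undergoes a canard explosion, so for $\lambda$ in an exponentially small interval below $x_M$ there is a branch of periodic orbits $\Gamma^s(\lambda)$ passing through canards without head, the maximal canard, canards with head, and finally a relaxation oscillation, and \cite{ksRO} identifies the sub-family of those $\lambda$ for which $\Gamma^s(\lambda)$ is stable. Since the proof of Theorem~\ref{fold} constructs $\Gamma^n(\lambda)$ out of a stable $\Gamma^s(\lambda)$ by a Poincar\'e--Bendixson argument, $\Gamma^n(\lambda)$ is defined for each such $\lambda$, and this is the family to which the corollary refers.

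Next I would fix such a $\lambda$ and distinguish the two cases already appearing in the proof of Theorem~\ref{fold}. If $\Gamma^s(\lambda)\subset\{x>0\}$ (the range $\lambda_G<\lambda<x_M$), then \eqref{general} and \eqref{shadow} share the same vector field on $\{x>0\}$, so $\Gamma^n(\lambda)=\Gamma^s(\lambda)$ and the bound holds with equality. If instead $\Gamma^s(\lambda)$ crosses the splitting line, I would follow the trajectory $\gamma_n$ of \eqref{general} through the point at which $\Gamma^s(\lambda)$ re-enters $\{x>0\}$; it coincides with $\Gamma^s(\lambda)$ until the latter next meets the $y$-axis at some $(0,y_c)$, and beyond that point Lemma~\ref{bound} forces $\gamma_n$ to remain strictly inside $\Gamma^s(\lambda)$ while $x<0$, hence to return to the $y$-axis strictly nearer the origin. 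The region $V$ bounded by this arc of $\gamma_n$ together with a segment of the $y$-axis is positively invariant for \eqref{general} --- the vector field is tangent to or points into $V$ along $\partial V$, exactly as in Theorem~\ref{fold} --- so Poincar\'e--Bendixson produces a stable periodic orbit contained in $V$, which lies in the interior of the region bounded by $\Gamma^s(\lambda)$; this orbit is $\Gamma^n(\lambda)$. In both cases $\Gamma^n(\lambda)$ lies in the closed region bounded by the stable canard $\Gamma^s(\lambda)$, which is the assertion.

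The step I expect to require the most care is bookkeeping rather than new geometry: one must verify that the range of $\lambda$ over which \cite{ksRO} guarantees a \emph{stable} $\Gamma^s(\lambda)$ is contained in the range over which the Poincar\'e--Bendixson construction of Theorem~\ref{fold} applies, and that the orbit it produces coincides with the family $\Gamma^n(\lambda)$ of Theorem~\ref{fold}(i). Since that construction is used here verbatim, this should be immediate. I would close with the observation that, because $\Gamma^n(\lambda)$ is nested strictly inside $\Gamma^s(\lambda)$ for every such $\lambda$, the nonsmooth canard family inherits the qualitative profile of the smooth one --- small cycles for $\lambda$ near $x_M$, growing as $\lambda$ decreases --- which is the real content of the corollary.
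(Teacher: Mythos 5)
Your proposal is correct and follows the same route the paper intends: the paper offers no separate proof of Corollary~\ref{foldCor}, treating it as an immediate consequence of the construction in the proof of Theorem~\ref{fold}, where Lemma~\ref{bound} traps $\gamma_n$ inside $\Gamma^s(\lambda)$, the Poincar\'e--Bendixson argument produces $\Gamma^n(\lambda)$ inside the resulting positively invariant set $V$, and the case $\Gamma^s(\lambda)\subset\{x>0\}$ gives equality. Your reconstruction recovers exactly this argument, with the only addition being the (reasonable) bookkeeping remark that the $\lambda$-range supplied by \cite{ksRO} must overlap with the range for which the Theorem~\ref{fold} construction applies.
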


Given that canard explosion happens in smooth systems, the result should not be surprising.  Perhaps more surprising is the possibility of having canard cycles arise as a result of a nonsmooth, Hopf-like bifurcation at $\lambda=0$, explained in the following theorem.  Figure \ref{fig:canSup} depicts the canard explosion at a smooth fold and a corner as a result of supercritical Hopf bifurcations in \eqref{general} where 
\begin{equation}
	\begin{array}{ccc}
	g(x) = (x-1)^2 -1 & \text{    and     } & \displaystyle h(x) = -\left(x+\frac{1}{15}\right)^2 \left(x - \frac{73}{30}\right)-\frac{73}{6750}.
	\end{array}
\end{equation}
Because programs like AUTO require smooth systems, producing diagrams like those in Figure \ref{fig:canSup} can be difficult.  Figure \ref{fig:canSupCyc} (as well as Figure \ref{fig:superExp}) was produced using NDSolve in Mathematica.  The system was simulated with different values of $\lambda$---0.01415 for the small cycle at the corner, 0.0142 for the blue orbit, 1.595 for the red orbit, and 1.596 for the small cycle at the fold.  The amplitude diagrams in Figures \ref{fig:canSup}b-d, were produced using the ode23s function in MATLAB.  The system was allowed to equilibrate to a periodic orbit for a number of fixed $\lambda$ values---with more samples being chosen in the explosion regions (depicted in Figures \ref{fig:canSupC} and \ref{fig:canSupF}).  Then the maximum and minimum $y$ coordinates of the orbit were calculated to obtain the vertical amplitude of the orbit.  

\begin{figure}[t!]
        \centering
        \begin{subfigure}[b]{0.45\textwidth}
                \includegraphics[width=\textwidth]{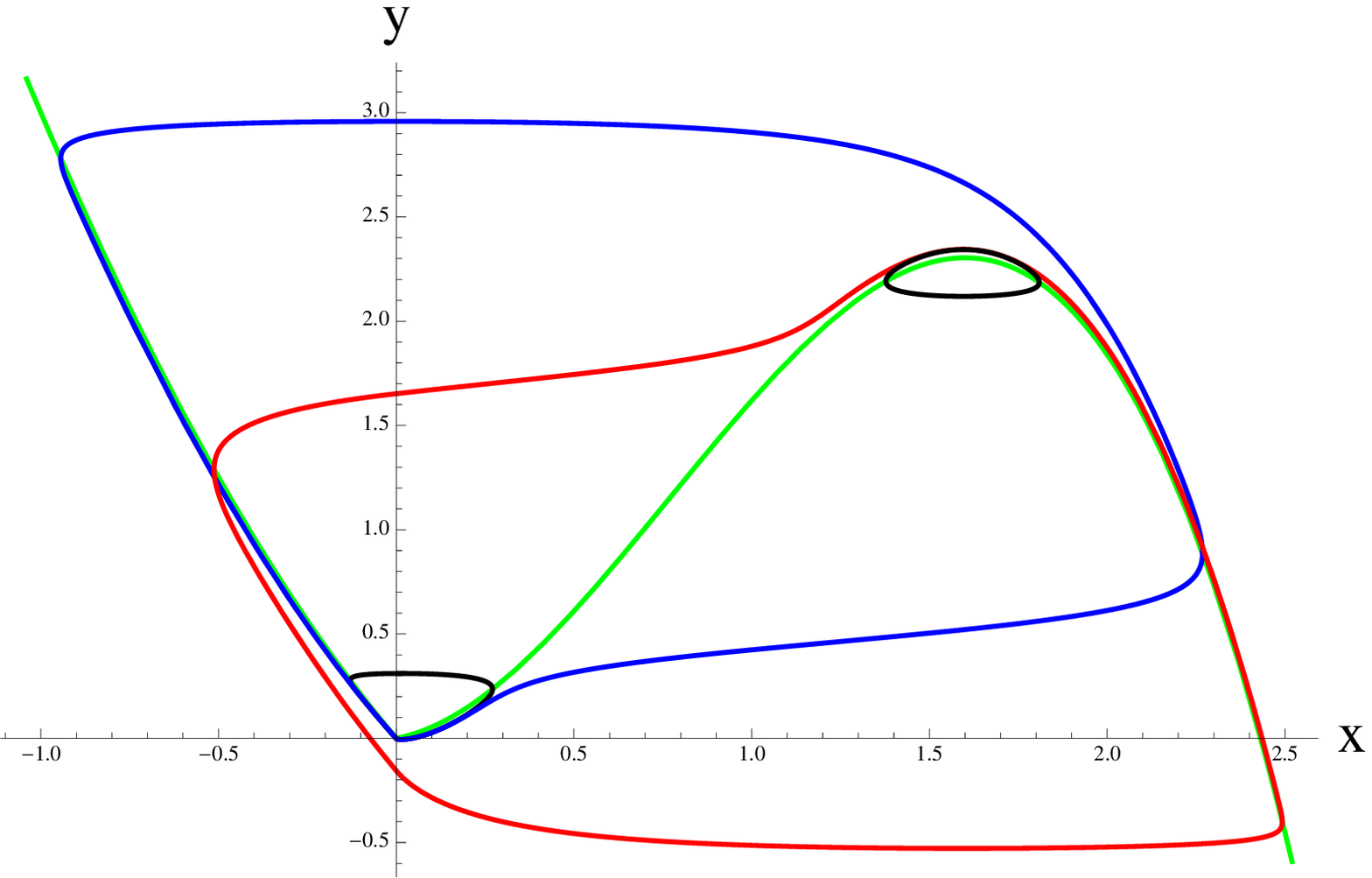}
                \caption{Nonsmooth canard cycles in the supercritical case.  The black orbits depict cycles before the explosion at both the smooth fold and the corner. The blue orbit is a post-explosion canard with head at the corner.  The red orbit is a post-explosion canard with head at the smooth fold.}
                \label{fig:canSupCyc}
        \end{subfigure}
        ~ 
        \begin{subfigure}[b]{0.45\textwidth}
                \includegraphics[width=\textwidth]{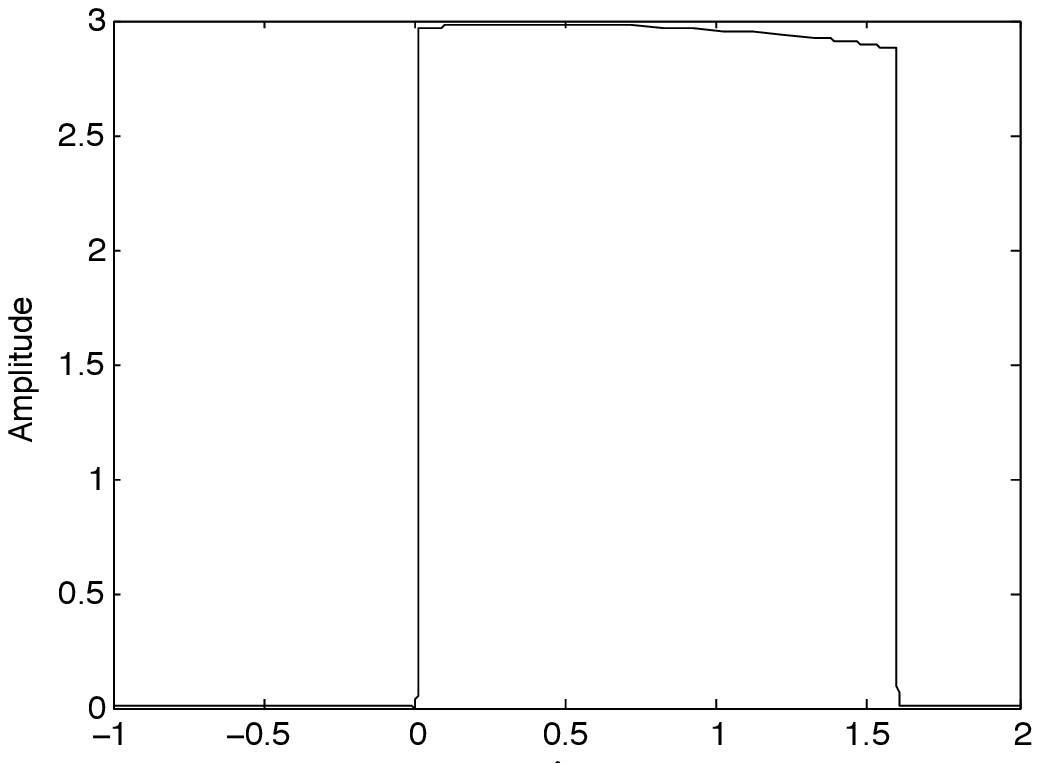}
                \caption{Amplitudes of nonsmooth canard cycles in the supercritical case, showing the explosion at the corner (on the left) and smooth fold (on the right).}
                \label{fig:canSupAmp}
        \end{subfigure}
                \centering
        \begin{subfigure}[t]{0.45\textwidth}
                \includegraphics[width=\textwidth]{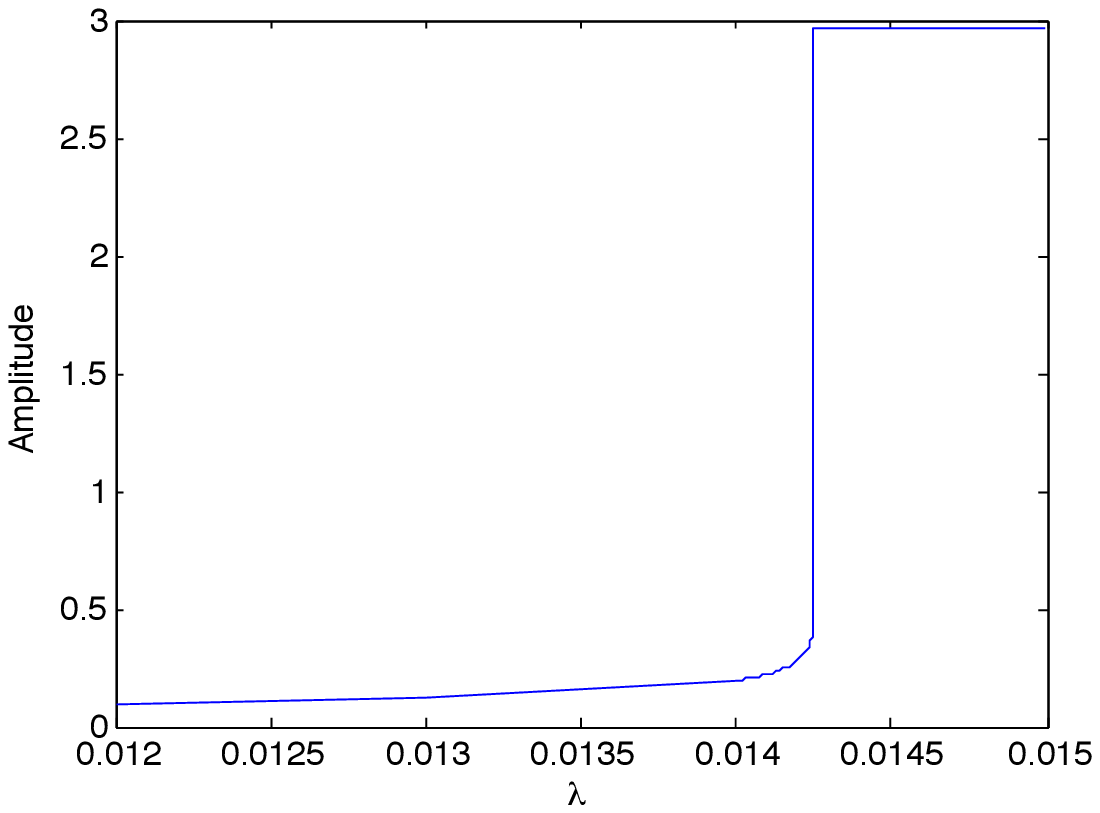}
                \caption{A closer look at the explosion at the corner.}
                \label{fig:canSupC}
        \end{subfigure}
        ~ 
        \begin{subfigure}[t]{0.45\textwidth}
                \includegraphics[width=\textwidth]{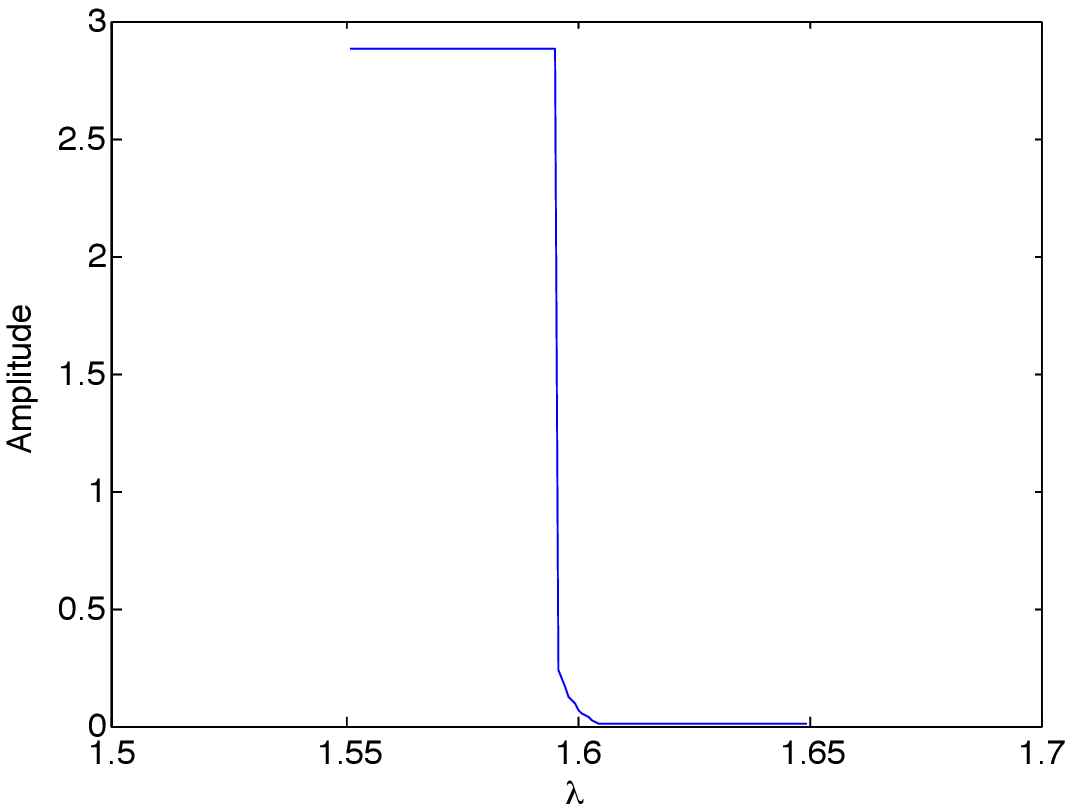}
                \caption{A closer look at the explosion at the fold.}
                \label{fig:canSupF}
        \end{subfigure}
        \caption{Images characterizing a canard explosion in non-smooth systems where $\epsilon =0.2$.  The bifurcations are supercritical at both the fold and the corner.}
        \label{fig:canSup}      
\end{figure}

\subsection{Conditions for the creation of canard cycles at the corner}
	\begin{theorem}
		\label{corner}
			In system \eqref{general}, assume $g(0)=0 = h(0)$, $h'(0) > 0$, and $g'(0) < 0$.  The system undergoes a bifurcation for $\lambda = 0$ by which a stable periodic orbit $\Gamma^n (\lambda) $ exists for $0 < \lambda < x_M.$  There exists an $\epsilon_0$ such that for all $0 < \epsilon< \epsilon_0$ the nature of the bifurcation is described by the following:
			\begin{enumerate}[(i)]
				\item If $0 < h'(0) < 2 \sqrt{\epsilon}$, then periodic orbits $\Gamma^n(\lambda)$ are born of a Hopf-like bifurcation as $\lambda$ increases through 0.  The bifurcation is subcritical if $| g'(0) | < | h'(0) |$ and supercritical if $| g'(0) | > | h'(0) |.$ 				
				\item If $h'(0) > 2 \sqrt{\epsilon}$, the bifurcation at $\lambda=0$ is a super-explosion.  The system has a stable periodic orbit $\Gamma^n(\lambda)$, and $\Gamma^n (\lambda)$  is a relaxation oscillation.  If $ |g'(0)| \geq 2\sqrt{\epsilon}$, the bifurcation is supercritical in that no periodic orbits appear for $\lambda<0$.  However, if $ |g'(0)| < 2 \sqrt{\epsilon}$ the bifurcation is subcritical, in that a stable periodic orbit and stable equilibrium coexist simultaneously for some $\lambda < 0$.
			\end{enumerate}
	\end{theorem}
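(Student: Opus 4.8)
The plan is to decouple the local analysis at the corner from the global return mechanism, and to reduce the local problem to the piecewise-linear model obtained by linearizing $g$ and $h$ at $0$. First I would record the equilibrium data: the unique equilibrium is $(\lambda,F(\lambda))$, which lies on $M^l$ with Jacobian trace $g'(\lambda)<0$ and determinant $\epsilon$ for $\lambda<0$ (so it is asymptotically stable---a node if $|g'(\lambda)|\ge 2\sqrt{\epsilon}$, a focus if $|g'(\lambda)|<2\sqrt{\epsilon}$), lies on $M^m$ with trace $h'(\lambda)>0$ for $0<\lambda<x_M$ (so it is repelling), and sits exactly at the corner when $\lambda=0$. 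Combined with the same trapping-region/Poincar\'e--Bendixson construction used in the proof of Theorem~\ref{fold}---an invariant annulus whose inner boundary is a small circle about the repelling equilibrium and whose outer boundary comes from the global structure of the critical manifold, with the shadow-system comparison of Lemma~\ref{bound} controlling the left half-plane---this produces a stable periodic orbit $\Gamma^n(\lambda)$ for every $0<\lambda<x_M$. The remaining and substantive content is the \emph{type} of the transition at $\lambda=0$, which I claim is governed entirely by how $h'(0)$ and $|g'(0)|$ compare with $2\sqrt{\epsilon}$ and with each other.

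For the local model, rescaling $x=\delta u$, $y=\delta v$, $\lambda=\delta\tilde{\lambda}$ converts \eqref{general} into the planar piecewise-linear system $\dot u=-v+g'(0)u$ on $\{u<0\}$, $\dot u=-v+h'(0)u$ on $\{u>0\}$, $\dot v=\epsilon(u-\tilde{\lambda})$, up to a perturbation that tends to $0$ in $C^1$, uniformly on compact sets, as $\delta\to0$. For this model the half-return maps on the $v$-axis are explicit: a half-plane whose virtual equilibrium is a linear focus with eigenvalues $\mu\pm i\omega$ multiplies the $v$-coordinate by $-e^{\pi\mu/\omega}$, whereas a half-plane whose virtual equilibrium is a stable node absorbs the orbit into the origin without return. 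When $h'(0)<2\sqrt{\epsilon}$ the right half is a focus, and composing the two half-maps at $\tilde\lambda=0$ gives a full return map on the positive $v$-axis equal to multiplication by $\Lambda=\exp\!\left(\pi\left(\frac{g'(0)}{\sqrt{4\epsilon-g'(0)^2}}+\frac{h'(0)}{\sqrt{4\epsilon-h'(0)^2}}\right)\right)$ when $|g'(0)|<2\sqrt{\epsilon}$, while if $|g'(0)|\ge 2\sqrt{\epsilon}$ the left half-plane absorbs the orbit into the corner, which is then trivially attracting. Since $s\mapsto s/\sqrt{4\epsilon-s^2}$ is strictly increasing on $[0,2\sqrt{\epsilon})$, one has $\Lambda<1\iff|g'(0)|>|h'(0)|$; hence the corner is attracting precisely when $|g'(0)|>|h'(0)|$ and repelling when $|g'(0)|<|h'(0)|$. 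A bifurcation analysis of the one-parameter family of return maps $P_{\tilde\lambda}$---whose positive fixed points are exactly the periodic orbits crossing the $y$-axis transversally---then gives a supercritical Hopf-like bifurcation in the first case (no cycle for $\lambda<0$; a small stable cycle shrinking to the corner as $\lambda\to0^+$) and a subcritical one in the second (for $\lambda<0$ the stable equilibrium on $M^l$ coexists with $\Gamma^n$, which persists by hyperbolicity, the two separated by an unstable cycle that collapses onto the corner as $\lambda\to 0^-$); the $C^1$-small $\delta$-corrections do not affect the criticality because the leading-order bifurcation is nondegenerate. This proves (i).

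For (ii), $h'(0)>2\sqrt{\epsilon}$ makes the virtual equilibrium in $\{x>0\}$ an unstable node, whose unstable eigendirections have $\dot u$ and $v$ of the same sign; consequently every orbit that enters $\{x>0\}$ near the corner moves monotonically away from the $y$-axis, reaches a neighbourhood of the fold $x_M$, and jumps. No small cycle can form, and for $\lambda\ge0$ the only attractor is the relaxation oscillation $\Gamma^n(\lambda)$, so the attractor changes discontinuously from the stable equilibrium to a relaxation oscillation as $\lambda$ crosses $0$: a super-explosion. For the criticality I would treat the two subcases for $\lambda<0$ small. If $|g'(0)|\ge 2\sqrt{\epsilon}$ the equilibrium on $M^l$ is a stable node; the orbit descending the attracting branch funnels monotonically into it, and a trapping argument (using Lemma~\ref{bound} and Corollary~\ref{lemCor} to prevent orbits that enter $\{x<0\}$ from re-crossing into $\{x>0\}$) shows it attracts everything, so there is no periodic orbit for $\lambda<0$ and the bifurcation is supercritical. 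If $|g'(0)|<2\sqrt{\epsilon}$ the equilibrium on $M^l$ is a stable focus lying within $\mathcal{O}(|\lambda|)$ of the corner, so the incoming orbit overshoots into $\{x>0\}$ and escapes to the relaxation branch; the relaxation oscillation therefore survives for $\lambda$ slightly negative and coexists with the (locally) stable equilibrium, and the bifurcation is subcritical. Finally, $\epsilon_0$ is taken small enough that $2\sqrt{\epsilon}$ is dominated by the $\mathcal{O}(1)$ data (the value $x_M$ and the scales on which $g',h'$ vary), so that the corner linearization governs the dynamics on the region the relevant orbits visit and the singular/relaxation structure used in the global arguments is in force.

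I expect the main obstacle to be the two global statements in part (ii): that when $|g'(0)|\ge 2\sqrt{\epsilon}$ the stable node is genuinely globally attracting for $\lambda<0$ (so that no periodic orbit survives), and dually that when $|g'(0)|<2\sqrt{\epsilon}$ the relaxation oscillation really does persist past $\lambda=0$ rather than being destroyed at the corner. Both require gluing the explicit piecewise-linear picture near the corner to a non-local trapping argument built on the shadow-system comparison, and making the $\delta\to0$ reduction uniform enough that the leading-order dichotomy is not spoiled by the nonlinear corrections; the bookkeeping that separates the ``absorbed by a node'' from the ``overshoot and escape'' scenarios, uniformly for $\lambda$ in a one-sided neighbourhood of $0$, is the delicate part.
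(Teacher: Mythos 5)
Your proposal tracks the paper's proof closely at the global level---the existence of $\Gamma^n(\lambda)$ on $0<\lambda<x_M$ is established in the paper by the same trapping-polygon/Poincar\'e--Bendixson argument, and in (ii) the paper likewise exploits the near-horizontal strong unstable eigenvector of the node in $\{x>0\}$ to force the strong unstable trajectory past the fold, then distinguishes node versus focus on the $g$-side and encloses the relevant regions with explicit sets $V$ and $V'$ to settle criticality. The genuine difference is in the local analysis for (i): the paper does not rescale to the piecewise-linear model and compute the half-return maps; it simply cites Simpson--Meiss, sets $\Lambda=\mu_u/\omega_u-\mu_s/\omega_s$, and reads off the criticality from $\mathrm{sgn}\,\Lambda$. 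Your explicit derivation of the return-map multiplier $e^{\pi(\mu_g/\omega_g+\mu_h/\omega_h)}$ is a clean substitute, and it is in fact the \emph{more} self-consistent one: as written the paper's proof concludes ``subcritical when $h'(0)<|g'(0)|$,'' which contradicts the theorem's own statement ``supercritical if $|g'(0)|>|h'(0)|$,'' whereas your conclusion (multiplier $<1\iff|g'(0)|>|h'(0)|\Rightarrow$ corner attracting $\Rightarrow$ supercritical) agrees with the theorem and with the dynamics. One smaller divergence: for the sub-case $|g'(0)|\ge 2\sqrt{\epsilon}$ of (i) the paper invokes Corollary~\ref{lemCor} with a focus-shadow to import the cycle, rather than your ``absorbing node'' observation; both routes work. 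Finally, where your proposal is thinnest is exactly where you flag it---the global statements in (ii). The paper is somewhat more concrete there (it builds $V'$ from a single spiralling orbit through $(0,\beta)$ entering $\{x>0\}$ below the origin, then takes $W\setminus V'$ as the positively-invariant annulus, and for the node case appeals to the strong stable trajectory as a barrier), but it is also terse; your appeal to Lemma~\ref{bound}/Corollary~\ref{lemCor} to prevent re-crossing is not quite the argument the paper uses (the shadow equilibrium is \emph{unstable} for these $\lambda$, so the lemma does not immediately trap orbits in the left half-plane), and you would need the paper's strong-stable-separatrix argument there instead. That is the only place where the roadmap, as stated, would not close on its own.
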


\begin{proof}
The system always has a unique equilibrium $p_\lambda = (\lambda, F(\lambda))$, and direct computation of the corresponding eigenvalues shows that
	\begin{equation}
		\mu_{\pm} (\lambda) = \left\{ \begin{array}{ll}
			\displaystyle \frac{g'(\lambda) \pm \sqrt{ [g'(\lambda)]^2 - 4 \epsilon} }{2} ,	 & \lambda<0 \\ [\bigskipamount]
			\displaystyle \frac{h'(\lambda) \pm \sqrt{ [h'(\lambda)]^2 - 4 \epsilon} }{2} ,	 & \lambda > 0 .
			\end{array} \right. 
	\end{equation}

For $0 < \lambda < x_M$ we have $h'(\lambda) > 0$, so both eigenvalues have $\text{Re}(\mu_{\pm}) > 0$ and the equilibrium $p_\lambda$ is unstable.  To demonstrate the existence of a stable periodic orbit $\Gamma^n(\lambda)$, it suffices to show that there is a positively invariant set containing $p_\lambda$.  Consider the set $W$ as shown in Figure \ref{fig:w}.  The boundary $\partial W$ is composed of six line segments $l_j$ for $j = 1, \ 2,\dots 6.$  Choose any point $(\hat{x}, 0 )$ with $\hat{x} < 0$.  

\begin{figure}[t]
	\begin{center}
		\includegraphics[width=0.9\textwidth]{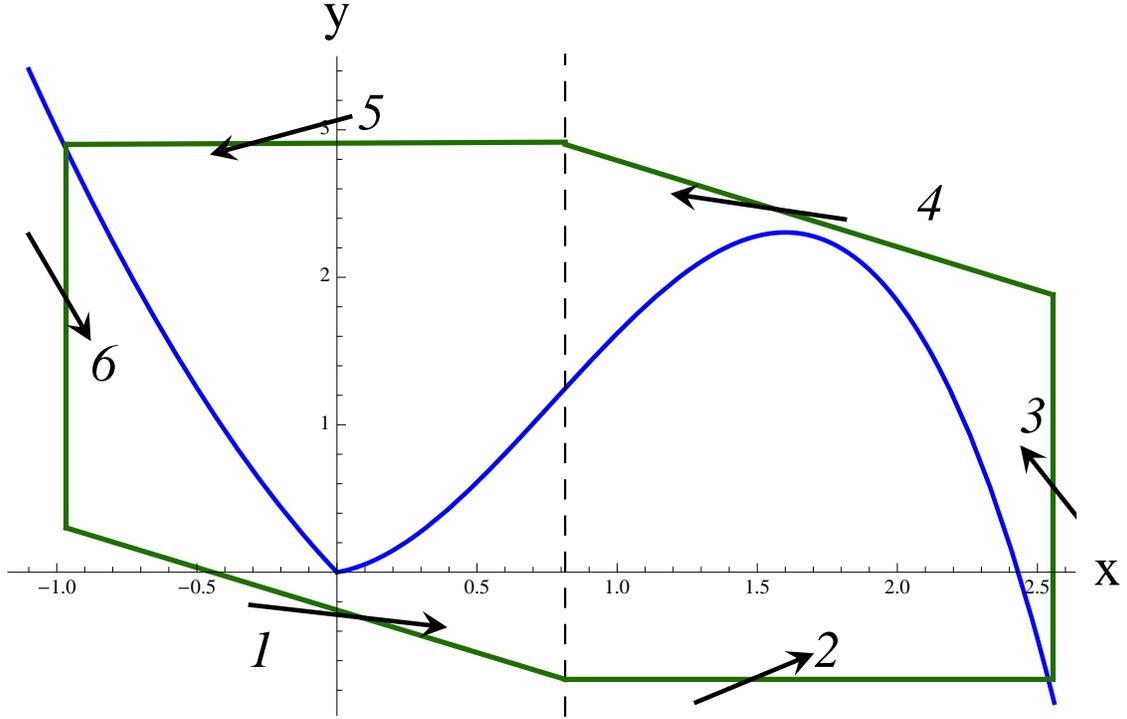}
			\caption{An example of a set $W$ which is positively invariant.  The numbers indicate the 6 line segments $l_i$ forming the boundary $\partial W$.}
			\label{fig:w}
	\end{center}
\end{figure}

\begin{align*}
	l_1 &= \{ ( x , y ) : y = m_1 (x- \hat{x} ), \ m_1 < 0 \text{ is } \mathcal{O}(1), \ \hat{x} \leq x \leq \lambda \} \\
	l_2 &= \{ (x , y_1 ): y_1 = m_1 (\lambda - \hat{x}),  \lambda \leq x \leq x_2 = (h^{-1}(y_1) + 1) \} \\
	l_3 &= \{ (x_2 , y ): y_1 < y < y_3 = h(x_M) \} \\
	l_4 &= \left\{ ( x , y ) : y=m_4(x-x_2)+y_3 \text{ where } m_4 > \frac{ g ( \hat{x} )- y_3 }{ \lambda - x_2 } \text{ is } \mathcal{O}(1) \right\} \\
	l_5 &= \{ ( x , y_5 ) : y_5 = m_4(\lambda - x_2)+y_3, \ \hat{x} < x < \lambda \} \\
	l_6 &= \{ (\hat{x} , y) : 0<y<y_5 \}
\end{align*}
Note that $W$ can be made as large as needed.  The vector field points inward on $\partial W$ and the Poincar\'{e}-Bendixson theorem guarantees the existence of an attracting periodic orbit $\Gamma^n (\lambda)$ for $0<\lambda < x_M$.  The $\Gamma^n$ created through the bifurcation at $\lambda=0$ will differ in amplitude depending on $h'(0)$.

First, we consider the case where $0 < h'(0) < 2 \sqrt{\epsilon}$.  If $| g'(0) |  < 2 \sqrt{\epsilon} $ as well, the bifurcation at $\lambda=0$ is a nonsmooth Hopf bifurcation as described by Simpson and Meiss \cite{simpmeiss}.  We define		\begin{align*}
		\mu_u &= h'(0) \\
		\mu_s &= |g'(0)| \\
		\omega_u &= \sqrt{ (4 \epsilon - [h'(0)]^2)} \\
		\omega_s &= \sqrt{ (4 \epsilon - [g'(0)]^2 )},
	\end{align*}
	and
	$$ \Lambda = \frac{ \mu_u}{\omega_u} - \frac{ \mu_s}{ \omega_s}.$$
According to \cite{simpmeiss}, the nonsmooth Hopf bifurcation is subcritical if $\Lambda <0$ and supercritical if $\Lambda > 0.$  We have
	\begin{align*}
	\Lambda < 0 	&\iff  		\frac{ \mu_u}{\omega_u} < \frac{ \mu_s}{ \omega_s} \\
				&\iff		h'(0) \sqrt{ [4 \epsilon - g'(0)]^2 } < |g'(0)| \sqrt{  4 \epsilon - [h'(0)]^2} \\
				&\iff		4\epsilon [h'(0)]^2 -[h'(0)]^2 \cdot  [g'(0)]^2   < 4 \epsilon [g'(0)]^2  [h'(0)]^2 \cdot  [g'(0)]^2  \\
				&\iff		 [h'(0)]^2 < [g'(0)]^2.
	\end{align*}
Therefore, there is a subcritical nonsmooth Hopf bifurcation when $h'(0) < |g'(0)| $ and supercritical nonsmooth Hopf bifurcation when $ h'(0) > |g'(0)| $.  Corollary \ref{lemCor}, also guarantees the existence of canard cycles for the case where $|g'(0)| > 2 \sqrt{\epsilon}$.  The bifurcation in that case is a stable node-to-unstable focus.  The canard cycles in the system with a node will be contained in the cycles for the system with a nonsmooth Hopf (stable focus-to-unstable focus) bifurcation.  This proves assertion $(i)$.

Next, we consider the case where $h'(0) \geq 2 \sqrt{\epsilon}.$  In this case, for $\lambda > 0$ (but bounded away from $x_M$), the equilibrium $p_\lambda$ is an unstable node.  Let $\mu_2 \geq \mu_1> 0$ be the strong and weak unstable eigenvalues corresponding to $p_\lambda$.  Also, let $v_{1,2} = ( x_{1,2},y_{1,2} )$ be the associated eigenvectors.  Then for $i =1 , 2$ we have
\begin{equation*}
	\begin{array}{l}
		h'(\lambda) x_i - y_i = \mu_i x_i \\
		\epsilon x_i = \mu_i y_i.
	\end{array}
\end{equation*}
This implies that the slope of the eigenvector
$$v_2 =   \frac{ \epsilon}{ \mu_2}.$$
Now, $\mu_2$ depends on $\epsilon$, however $\mu_2 \rightarrow 2 h'(\lambda) \neq 0$ as $\epsilon \rightarrow 0$.  Thus $v_2 \rightarrow 0$ as $\epsilon \rightarrow 0$.  Therefore, there exists an $\epsilon_0$ such that for all $0< \epsilon < \epsilon_0$, the strong unstable trajectory must enter the region of phase space $x > x_M$, pass over the point $(x_M, h (x_M))$ and proceed to the left-half plane.  Since the vector field of \eqref{general} points into the left half-plane along the $y$-axis for $y > 0$, the strong unstable trajectory must re-enter the right half-plane somewhere below $y = 0$.  Following the trajectory further, it must continue downward to the right until it reaches the $y$-nullcline $x=\lambda$ at some point $(\lambda, \hat{y})$ where $\hat{y} < h(\lambda) $.  Let $V$ be the region enclosed by the trajectory described above and the line segment along $x=\lambda$ connecting $\hat{y}$ and $h(\lambda)$.  All trajectories in $W$ that start outside of $V$ are bounded outside of $V$ (see Figure \ref{fig:vSupFig}).  Therefore $\Gamma^n (\lambda)$ must be a relaxation oscillation, and the bifurcation must be a super-explosion, as depicted in Figure \ref{fig:superExp}.  

\begin{figure}[t]
	\begin{center}
		\includegraphics[width=0.5\textwidth]{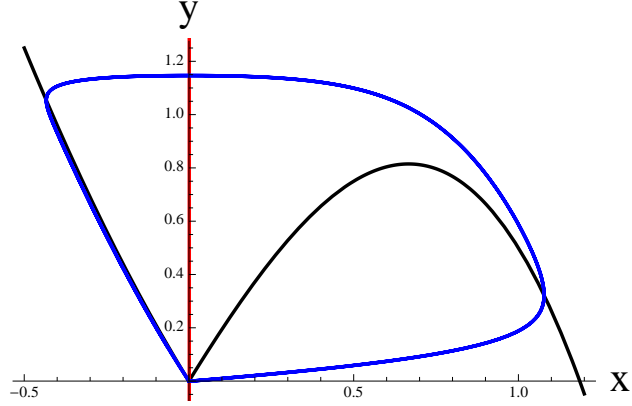}
			\caption{The stable orbit of a super-explosion (blue).  The system used is \eqref{general} with $g(x) = (x-1)^2-1$, and $h(x) = -(x+1)^2 (x-1.5) - 1.5$ with $\epsilon = 0.2$ and $\lambda = 0.001$.  The line $x=\lambda$ (red) is the slow nullcline. }
			\label{fig:superExp}
	\end{center}
\end{figure}

Suppose also that $g'(0) \geq 2 \sqrt{\epsilon}$.  Then, for $\lambda < 0$ the equilibrium $p_\lambda$ is a stable node.  It is the global attractor of the system (similar to the piecewise-linear case in \cite{pwlCanards}) since, the strong stable trajectory to $p_\lambda$ bounds trajectories above the node in the left half-plane.  Since no stable periodic orbits can coexist with $p_\lambda$, we say the bifurcation is supercritical.

On the other hand, if $g'(0) < 2 \sqrt{\epsilon}$, $p_\lambda$ is a stable focus.  For $\lambda < 0$, $| \lambda |$ sufficiently small, there exists a $\beta \in ( g(\lambda) , h(x_M) )$ such that the trajectory through $(0, \beta)$ spirals around $p_\lambda$ and enters the right half plane below $(0,0)$.  The dynamics in the right half-plane are governed by an unstable node in the shadow system \eqref{shadow}.  After entering the right half-plane, the trajectory through $\beta$ must proceed to cross $M^r$ and reenter the left half-plane at $(0, \beta ')$ where $\beta' > \beta$.  If we let $V'$ denote the region enclosed by the trajectory through $(0,\beta)$ and the line segment along the $y$-axis connecting $\beta$ to $\beta'$, then $V'$ is a negatively invariant set.  We can choose $W$ large enough so that $V' \subset W$.  The set $W \setminus V'$ is positively invariant and contains no stable equilibria (see Figure \ref{fig:vPrime}).  Therefore, there must be an attracting periodic orbit $\Gamma^n(\lambda)$ inside $W \setminus V'$.  Since an attracting equilibrium and an attracting periodic orbit coexist simultaneously, we call this a subcritical super-explosion.  This proves assertion (ii).
\end{proof}

\begin{figure}[t!]
        \centering
        \begin{subfigure}[t]{0.45\textwidth}
                \includegraphics[width=\textwidth]{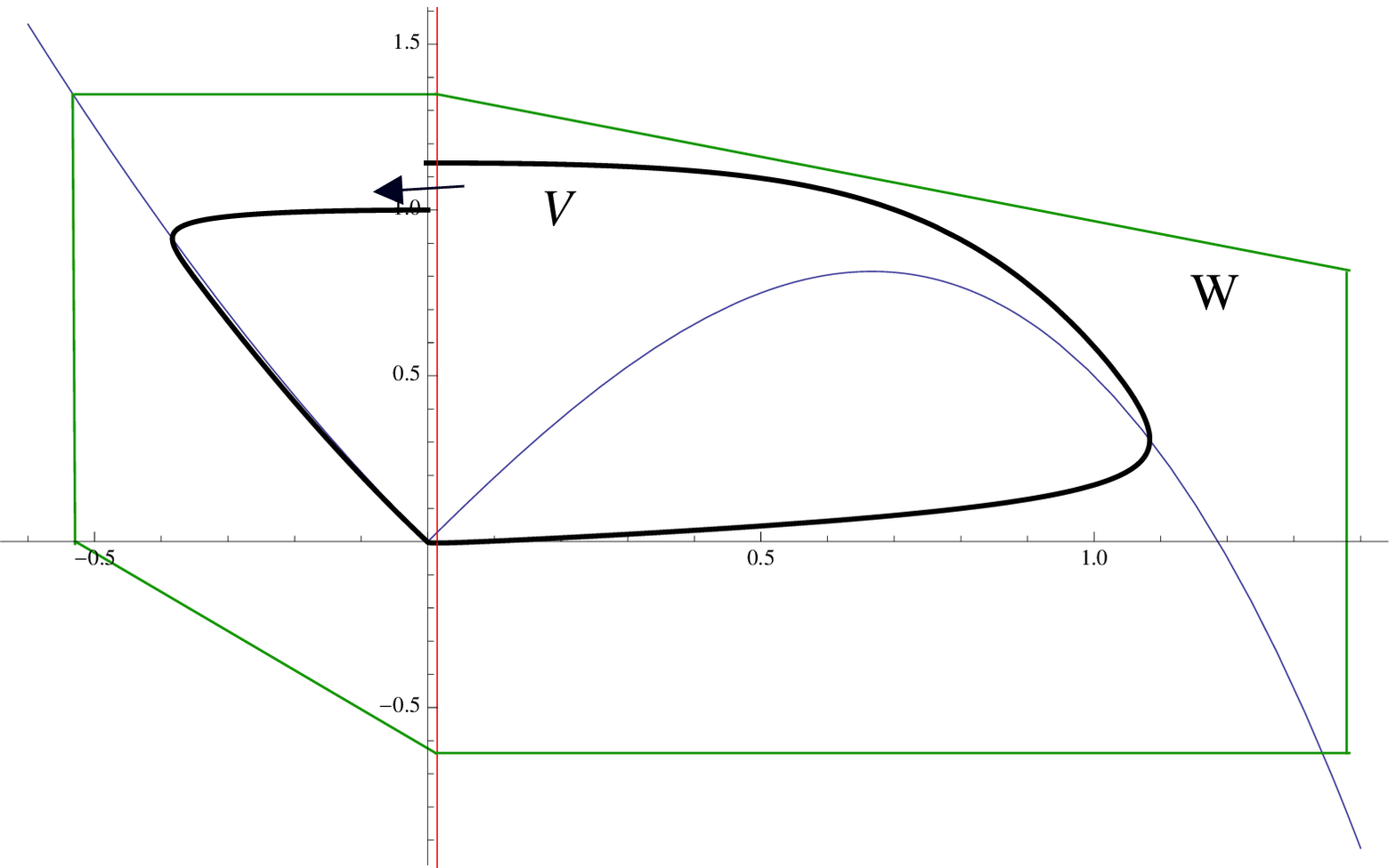}
			\caption{The set $ W \setminus V$ for $\lambda = 0.014$.}
			\label{fig:vSupFig}
        \end{subfigure}
        ~ 
        \begin{subfigure}[t]{0.45\textwidth}
                \includegraphics[width=\textwidth]{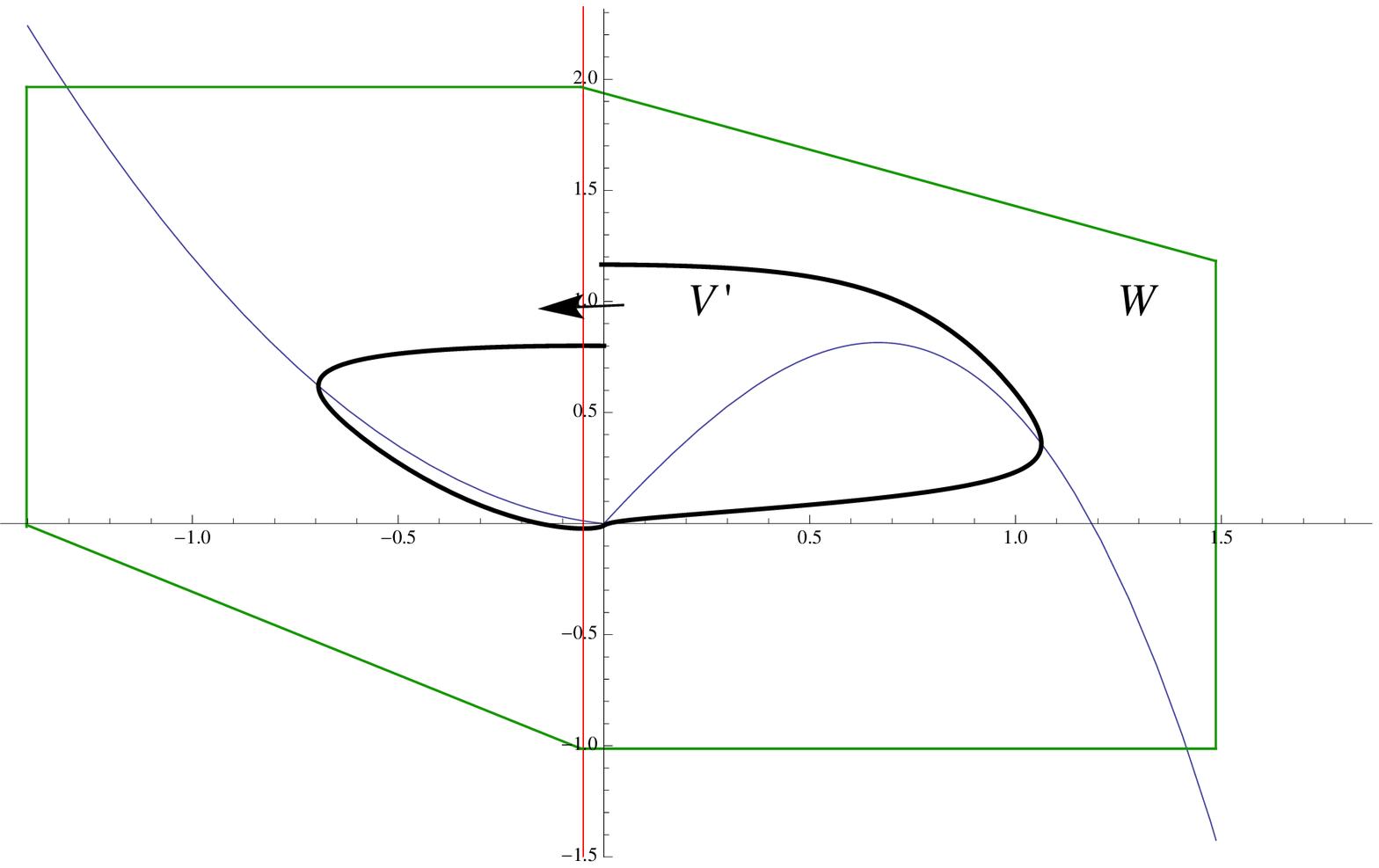}
			\caption{The set $ W \setminus V'$ for $\lambda = -0.05$.}
			\label{fig:vPrime}
        \end{subfigure}
        \caption{Positively invariant sets demonstrating the existence of attracting periodic orbits for (a) the standard (supercritical) super-explosion and (b) the subcritical super-explosion.  $W$ is the region bounded by the six (green) line segments as in Figure \ref{fig:w}.  The sets enclosed by the bold trajectory are (a) $V$ or (b) $V'$. }
        \label{fig:}      
\end{figure}

There are two simple corollaries to Theorem \ref{corner}.  The first corollary treats the case of a `Z'-shaped critical manifold.
	\begin{corollary}
		\label{corner2}
			Consider the system with two corners and no smooth folds:
\begin{equation}
	\label{general2}
	\begin{array}{l}
		\dot{x} = -y + F(x) \\
		\dot{y} = \epsilon ( x - \lambda)
	\end{array}
\end{equation}
where $$ F(x) = \left\{ \begin{array}{ll}
	g(x) & x\leq 0 \\
	h(x) & 0 \leq x \leq x_M\\
	f(x) & x_M \leq x
	\end{array} \right. $$ 
with $g,h,f \in C^k, \ k \geq 1$, $g(0) = h(0) =0$, $f(x_M) = h(x_M)$ $g'(0) < 0$, $h'(x) > 0 $ for all $0< x < x_M$, and $f'(x_M) < 0$.  
			The system undergoes a bifurcation for $\lambda = 0$ by which a stable periodic orbit $\Gamma^n (\lambda) $ exists for $0 < \lambda < x_M.$  There exists an $\epsilon_0$ such that for all $0 < \epsilon< \epsilon_0$ the nature of the bifurcation is described by the following:
			\begin{enumerate}[(i)]
				\item If $0 < h'(0) < 2 \sqrt{\epsilon}$, then canard cycles $\Gamma^n(\lambda)$ are born of a Hopf-like bifurcation as $\lambda$ increases through 0.  The bifurcation is subcritical if $| g'(0) | < | h'(0) |$ and supercritical if $| g'(0) | > | h'(0) |.$ 				
				\item If $h'(0) > 2 \sqrt{\epsilon}$, the bifurcation at $\lambda=0$ is a super-explosion.  The system has a stable periodic orbit $\Gamma^n(\lambda)$, and $\Gamma^n (\lambda)$  is a relaxation oscillation.  If $ |g'(0)| \geq 2\sqrt{\epsilon}$, the bifurcation is supercritical in that no periodic orbits appear for $\lambda<0$.  However, if $ |g'(0)| < 2 \sqrt{\epsilon}$ the bifurcation is subcritical, in that a stable periodic orbit and stable equilibrium coexist simultaneously for some $\lambda < 0$.
			\end{enumerate}
	\end{corollary}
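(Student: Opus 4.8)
The strategy is to re-run the proof of Theorem~\ref{corner} almost verbatim, after isolating the only two places in that proof that actually refer to the point $x_M$. The bifurcation analysis at $\lambda=0$ is purely local at the corner $x=0$: the unique equilibrium of \eqref{general2} is $p_\lambda=(\lambda,F(\lambda))$, and for $\lambda$ near $0$ its eigenvalues are exactly the $\mu_\pm(\lambda)$ computed in the proof of Theorem~\ref{corner}, depending only on $g'(0)$, $h'(0)$ and $\epsilon$. Hence neither the dichotomy between $h'(0)<2\sqrt{\epsilon}$ and $h'(0)>2\sqrt{\epsilon}$, nor the Simpson--Meiss criticality computation $\Lambda<0\iff[h'(0)]^2<[g'(0)]^2$, is affected by replacing the smooth fold at $x_M$ with a corner, and Corollary~\ref{lemCor} still covers the stable-node subcase $|g'(0)|>2\sqrt{\epsilon}$ (the nonsmooth-Hopf formula of \cite{simpmeiss} again being used only when both $|g'(0)|$ and $h'(0)$ are $<2\sqrt{\epsilon}$). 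The two ingredients of the proof of Theorem~\ref{corner} that used smoothness of the fold are (a) the construction of the positively invariant box $W$, whose right-hand segments $l_2,l_3,l_4$ were built out of $h$, and (b) the claim that the strong unstable trajectory of $p_\lambda$ ``passes over the point $(x_M,h(x_M))$ and proceeds to the left-half plane.'' Both rely only on $F$ having a local maximum at $x_M$ and on the branch $\{(x,f(x)):x>x_M\}$ being attracting for the fast flow, and both of these hold here because $f'(x_M)<0$ and because $N_0$ is assumed `Z'-shaped.

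For assertion (i), with $0<h'(0)<2\sqrt{\epsilon}$: I build the analogue of $W$ from $f$ in place of $h$ on the segments lying over $x>x_M$; since the outer branches of a `Z'-shaped manifold are attracting, the vector field of \eqref{general2} points inward on a sufficiently large such $W$, so Poincar\'e--Bendixson yields an attracting periodic orbit $\Gamma^n(\lambda)$ for every $0<\lambda<x_M$, just as in Theorem~\ref{corner}. The existence and criticality of the Hopf-like bifurcation at $\lambda=0$ are then read off word-for-word from that proof. It remains to justify the word ``canard'': the family depends continuously on $\lambda$ (the parameter enters linearly and the system is piecewise-smooth in $x$), it emerges from $p_0=(0,0)$ with amplitude $\to 0$ as $\lambda\to0^+$, while for $\lambda$ close to $x_M$ the equilibrium $p_\lambda$ is a repeller well inside $W$, so the only attractor in $W$ is a large cycle that must wrap the repelling middle branch $M^m$. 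Squeezing the intermediate cycles against $M^m$ via the shadow-system comparison (Lemma~\ref{bound} and Corollary~\ref{lemCor}) together with the smooth canard-explosion estimates on $M^m$ (as in \cite{ksRO}) shows these $\Gamma^n(\lambda)$ run $\mathcal{O}(1)$-close to the repelling branch over an exponentially thin $\lambda$-window, i.e.\ they are canards; the remainder of (i) is identical to Theorem~\ref{corner}(i).

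For assertion (ii), with $h'(0)>2\sqrt{\epsilon}$: for $0<\lambda<x_M$ bounded away from $x_M$ the point $p_\lambda$ is an unstable node, and the estimate giving $v_2=\epsilon/\mu_2$ with $\mu_2\to 2h'(\lambda)\neq 0$ as $\epsilon\to0$ is unchanged, so there is an $\epsilon_0$ such that for $0<\epsilon<\epsilon_0$ the strong unstable trajectory overshoots the corner $(x_M,h(x_M))$, crosses into $x<0$, returns to the right half-plane below $y=0$, and meets $x=\lambda$ at a point $(\lambda,\hat y)$ with $\hat y<f(\lambda)$; the region $V$ it bounds has $W\setminus V$ positively invariant, so $\Gamma^n(\lambda)$ is a relaxation oscillation and $\lambda=0$ is a super-explosion --- the argument of Theorem~\ref{corner} with the corner at $x_M$ playing the role of the smooth fold and $f$ playing the role of $h$ on $x>x_M$ (cf.\ Figure~\ref{fig:vSupFig}). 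For $\lambda<0$: if $|g'(0)|\ge 2\sqrt{\epsilon}$ then $p_\lambda$ is a stable node whose strong stable manifold bounds the trajectories lying above it in $x<0$, so $p_\lambda$ is the global attractor and the super-explosion is supercritical; if $|g'(0)|<2\sqrt{\epsilon}$ then $p_\lambda$ is a stable focus, and for $|\lambda|$ small the trajectory through a suitable $(0,\beta)$ spirals about $p_\lambda$, drops below $(0,0)$, and re-enters $x<0$ at $(0,\beta')$ with $\beta'>\beta$, bounding a negatively invariant $V'$ with $W\setminus V'$ positively invariant and free of stable equilibria; Poincar\'e--Bendixson then produces an attracting $\Gamma^n(\lambda)$ coexisting with the stable focus, i.e.\ a subcritical super-explosion (cf.\ Figure~\ref{fig:vPrime}).

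No step is genuinely harder than in Theorem~\ref{corner}. The only two points needing care are: the verification that the inward-pointing box $W$ survives the substitution of $f$ for $h$ on $x>x_M$, which requires the same mild monotonicity/growth of the outer branches that was tacitly assumed in Theorem~\ref{corner} and is precisely the content of $N_0$ being `Z'-shaped; and the justification of ``canard cycle'' in (i), which is not a new phenomenon but a bookkeeping step combining the continuous growth of the family $\Gamma^n(\lambda)$ with the smooth canard-explosion machinery applied to the middle branch $M^m$ through the shadow system. I expect the latter to be the main, though still routine, obstacle.
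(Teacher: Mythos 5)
Your proof is correct and takes essentially the same approach as the paper, which dispatches this corollary in a single sentence: re-run the proof of Theorem~\ref{corner}, using the `2'-shaped system \eqref{general} in place of the fully smooth system \eqref{shadow} as the shadow system. Where the paper leans on that one-line prescription, you instead re-derive the positively invariant box $W$ and the strong-unstable-manifold argument directly with $f$ replacing $h$ on $x>x_M$; this is a fair expansion of what the paper leaves implicit, and your observation that these are the only two places in the proof of Theorem~\ref{corner} touching the smoothness of $F$ at $x_M$ is accurate. One caution on part (i): the corollary's statement upgrades ``periodic orbits'' (Theorem~\ref{corner}(i)) to ``canard cycles,'' and your attempt to justify that upgrade via Lemma~\ref{bound} and Corollary~\ref{lemCor} does not quite close the gap, since those comparison results control the trajectory only in the half-plane $x<0$, not along the repelling middle branch $M^m$ where the canard estimate actually lives. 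The machinery the paper provides for that step is Corollary~\ref{cornerBound}, which requires the shadow system to satisfy hypotheses (A1)--(A4') of \cite{ksRO}; the paper's one-line proof of the present corollary does not address this either, so the word ``canard'' in (i) should be read with the same informality as in the theorem rather than as something your proof must independently establish.
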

\begin{proof}
	The proof is the same as that of Theorem \ref{corner}, only we use \eqref{general} as our shadow system.
\end{proof}

The second corollary is an immediate application of Lemma \ref{bound}.
	\begin{corollary}
		\label{cornerBound}
		Fix $\epsilon > 0$. Assume the shadow system \eqref{shadow} satisfies the assumptions (A1)-(A4') from \cite{ksRO} for a canard point at $(x_m,h(x_m))$ where $x_m < 0$.  Also assume that $0 < h'(0) < 2 \sqrt{\epsilon}$ in the nonsmooth system \eqref{general}.  Then for fixed $\lambda \in (0, x_M)$, $\Gamma^n (\lambda)$ is bounded by the periodic orbit $\Gamma^s$ of the shadow system.  Furthermore, if the criticality parameter $A < 0$ and $ |x_m| < \lambda_s(\sqrt{\epsilon})$, then the periodic orbits $\Gamma^n (\lambda)$ created through the Hopf-like bifurcation will be canard cycles and the system \eqref{general} will undergo a canard explosion.
	\end{corollary}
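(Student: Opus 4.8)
\emph{Overall approach.} I would split the statement into the boundedness claim --- for fixed $\lambda\in(0,x_M)$, $\Gamma^n(\lambda)\subset\mathrm{int}\,\Gamma^s(\lambda)$ --- and the canard-explosion claim, and prove them in that order. The first is the ``immediate application of Lemma \ref{bound}'' advertised before the corollary, run exactly as in the proof of Theorem \ref{fold}. The second feeds the first into the Krupa--Szmolyan description of the shadow system together with Theorem \ref{corner}(i).

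\emph{Boundedness.} Because \eqref{shadow} has a canard point at $(x_m,h(x_m))$ with $x_m<0$, the curve $y=h(x)$ is genuinely `S'-shaped (minimum at $x_m<0$, maximum at $x_M>0$), so for every $\lambda\in(x_m,x_M)\supset(0,x_M)$ the shadow system has a unique, unstable equilibrium on its repelling middle branch surrounded by an attracting cycle $\Gamma^s(\lambda)$. On $\{x>0\}$ the two systems coincide, so the right-half-plane arc of $\Gamma^s(\lambda)$ is an orbit segment of \eqref{general} as well; let $(0,y_c)$, $y_c>0$, be where $\Gamma^s(\lambda)$ enters $\{x<0\}$. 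Lemma \ref{bound} says the orbit of \eqref{general} from $(0,y_c)$ enters immediately inside $\Gamma^s(\lambda)$, stays inside while $x<0$, and re-enters $\{x>0\}$ above the re-entry point of $\Gamma^s(\lambda)$; continued under the shadow-identical flow for $x>0$ it cannot cross $\Gamma^s(\lambda)$. This orbit arc and two $y$-axis segments bound a region $V\subset\mathrm{int}\,\Gamma^s(\lambda)$ that is positively invariant for \eqref{general}, and since $p_\lambda$ is unstable for $0<\lambda<x_M$ (Theorem \ref{corner}), Poincar\'{e}-Bendixson gives an attracting periodic orbit of \eqref{general} in $V$; this is $\Gamma^n(\lambda)$, which is thus bounded by $\Gamma^s(\lambda)$. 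This is precisely the argument of Theorem \ref{fold} and Corollary \ref{foldCor}, with $\Gamma^s(\lambda)$ in the role of the shadow orbit.

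\emph{Canard explosion.} Under (A1)--(A4') and $A<0$, \cite{ksRO} gives a canard explosion of \eqref{shadow}: a continuous family $\Gamma^s(\lambda)$ growing, over an exponentially small $\lambda$-interval pinned at $\lambda=x_m+\lambda_s(\sqrt{\epsilon})$, from small Hopf cycles (born at $\lambda=x_m$) through canards without head, a maximal canard, canards with head, to relaxation oscillations. The hypothesis $|x_m|<\lambda_s(\sqrt{\epsilon})$ forces $x_m+\lambda_s(\sqrt{\epsilon})>0$, and for $\epsilon$ small this value stays in $(0,x_M)$ since $|x_m|$ and $\lambda_s(\sqrt{\epsilon})$ are $o(1)$ while $x_M=\mathcal{O}(1)$; so the shadow's explosion happens at small positive $\lambda$. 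Meanwhile $0<h'(0)<2\sqrt{\epsilon}$ is case (i) of Theorem \ref{corner}: the bifurcation of \eqref{general} at $\lambda=0$ is Hopf-like, so $\Gamma^n(\lambda)\to p_0=(0,0)$ as $\lambda\downarrow 0$ --- it is born small, not as a relaxation oscillation (no super-explosion). Combining the two: for $\lambda$ below the shadow's explosion window $\Gamma^s(\lambda)$ is a canard without head (or lies entirely in $\{x>0\}$, in which case $\Gamma^n(\lambda)=\Gamma^s(\lambda)$), so by boundedness $\Gamma^n(\lambda)$ is itself a small or headless canard --- a cycle encircling the repelling equilibrium $p_\lambda$ and confined to that trapping region must shadow $M^m$ for $\mathcal{O}(1)$ time. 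Letting $\lambda$ increase from $0$, the amplitude of $\Gamma^n(\lambda)$ grows continuously from $0$ and must reach relaxation-oscillation size within $(0,x_M)$, so a sweeping/intermediate-value argument shows $\Gamma^n(\lambda)$ passes through canard cycles of all three types within an exponentially small $\lambda$-window; i.e.\ \eqref{general} undergoes a canard explosion.

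\emph{Main obstacle.} Boundedness is routine once Lemma \ref{bound} is in hand. The delicate point is the final step: promoting ``$\Gamma^n$ is born small and trapped inside the exploding shadow canards'' to ``$\Gamma^n$ sweeps the whole canard family.'' One needs (a) continuity --- ideally hyperbolicity and uniqueness --- of the cycle branch $\Gamma^n(\lambda)$ on $(0,x_M)$; (b) a genuine lower bound forcing $\Gamma^n(\lambda)$ to actually grow to relaxation size inside $(0,x_M)$ rather than staying small or jumping discontinuously; and (c) the identification of the intermediate $\Gamma^n(\lambda)$ as canards through their $\mathcal{O}(1)$-time passage near the repelling branch $M^m$. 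The subcritical subcase of Theorem \ref{corner}(i), where near $\lambda=0$ the stable cycle $\Gamma^n(\lambda)$ is the jumped-to cycle rather than an infinitesimal Hopf cycle, should be treated separately, and one should check that the $\lambda$-set on which $\Gamma^s(\lambda)$ fails to meet the $y$-axis does not spoil the sweeping argument.
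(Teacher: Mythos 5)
The paper gives no proof of this corollary, introducing it only as ``an immediate application of Lemma~\ref{bound},'' and your reconstruction --- Lemma~\ref{bound} for boundedness run exactly as in the proof of Theorem~\ref{fold}, then the Krupa--Szmolyan canard-explosion picture for the shadow system combined with Theorem~\ref{corner}(i) and the sign condition $|x_m|<\lambda_s(\sqrt{\epsilon})$ to place the shadow explosion at positive $\lambda$ --- is essentially that intended argument. The obstacles you flag at the end (continuity/uniqueness of the branch $\Gamma^n(\lambda)$, a genuine lower bound forcing growth to relaxation size, and the identification of the intermediate cycles as canards via $\mathcal{O}(1)$-time passage near $M^m$) are legitimate and are precisely the points the paper elides by calling the corollary immediate, so they represent honest caveats rather than a divergence from the paper's approach.
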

	
We have demonstrated that there are two types of nonsmooth bifurcations in which periodic orbits appear before the parameter reaches the bifurcation value.  We will show that these are truly subcritical bifurcations.  That is, as the bifurcation parameter moves away from the bifurcation value, the periodic orbits are destroyed.

\begin{proposition}
	\label{subcritDecay}
	In a system of the form \eqref{general} suppose there exists an $m < 0$ such that $g'(x) \leq m  < 0$ for all $x < 0$.  Then there exists a $K > 0$ such that if $\lambda < - K$, the system has no periodic orbits.
\end{proposition}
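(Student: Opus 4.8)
The plan is to produce an explicit Lyapunov function for the equilibrium $p_\lambda=(\lambda,g(\lambda))$ that decreases strictly along \emph{every} trajectory once $\lambda$ is sufficiently negative; no periodic orbit can then exist. Motivated by the function $R$ from Lemma~\ref{bound}, I would center it at $p_\lambda$ and weight the horizontal direction by $\epsilon$: set $E(x,y)=\tfrac12\bigl(\epsilon(x-\lambda)^2+(y-g(\lambda))^2\bigr)$. Differentiating along \eqref{general},
$$\dot{E}=\epsilon(x-\lambda)\dot{x}+(y-g(\lambda))\dot{y}=\epsilon(x-\lambda)\bigl(F(x)-g(\lambda)\bigr),$$
the cross terms cancelling exactly because of the $\epsilon$-weighting. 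Thus the sign of $\dot{E}$ is that of $(x-\lambda)\bigl(F(x)-g(\lambda)\bigr)$, and it suffices to make this $\le 0$ on all of $\mathbb{R}^2$, with equality only on the line $\{x=\lambda\}$.

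For $x\le 0$ and $\lambda<0$ we have $F(x)-g(\lambda)=\int_\lambda^x g'(s)\,ds$ with $g'\le m<0$ on the interval of integration, so $F(x)-g(\lambda)\le m(x-\lambda)$ when $x\ge\lambda$ and $F(x)-g(\lambda)\ge m(x-\lambda)$ when $x<\lambda$; in either case $(x-\lambda)\bigl(F(x)-g(\lambda)\bigr)\le m(x-\lambda)^2$, which is negative unless $x=\lambda$. The same estimate at $x=0$ gives $g(\lambda)=g(\lambda)-g(0)\ge |m|\,|\lambda|$, so $g(\lambda)\to\infty$ at least linearly in $|\lambda|$. For $x\ge 0$ we have $x-\lambda\ge|\lambda|>0$ and $F(x)-g(\lambda)=h(x)-g(\lambda)\le h(x_M)-g(\lambda)$, using that $h$ attains its maximum over $[0,\infty)$ at $x_M$ (part of the ``2''-shape assumption on $N_0$). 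Choosing $K:=h(x_M)/|m|$ (note $h(x_M)>0$ since $h'(0)>0=h(0)$, so $K>0$), any $\lambda<-K$ makes $g(\lambda)>h(x_M)$, hence $F(x)-g(\lambda)<0$ on $x\ge 0$. Therefore $\dot{E}\le 0$ on all of $\mathbb{R}^2$ for $\lambda<-K$, with $\dot{E}=0$ only on the line $x=\lambda$.

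To finish, suppose $\gamma$ were a periodic orbit of \eqref{general} for some $\lambda<-K$, with period $T$. Then $\int_0^T \dot{E}(\gamma(t))\,dt=E(\gamma(T))-E(\gamma(0))=0$, and since $\dot{E}\le 0$ everywhere this forces $\dot{E}\equiv 0$ along $\gamma$, i.e.\ $\gamma\subset\{x=\lambda\}$. But on that line $\dot{y}=\epsilon(x-\lambda)=0$ and $\dot{x}=-y+g(\lambda)$, which vanishes only at $p_\lambda$, so $\gamma$ would reduce to the single point $p_\lambda$, contradicting that it is a periodic orbit. (The same argument plus LaSalle's invariance principle in fact shows $p_\lambda$ is globally asymptotically stable for $\lambda<-K$, which is stronger than the assertion.)

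I expect the one genuinely non-obvious step to be the right half-plane estimate: the hypothesis ``$g'(x)\le m<0$ for all $x<0$'' is a statement purely about $x<0$, yet what it buys us in $x>0$ is that the equilibrium height $g(\lambda)$ grows without bound and eventually dominates the global maximum $h(x_M)$ of the middle and right branches, so $E$ keeps decreasing even during large excursions into $x>0$. The only place care is needed beyond routine estimates is the claim that $h$ is bounded above on $[0,\infty)$ with $\sup h=h(x_M)$; this is implicit in calling $N_0$ ``2''-shaped, but if one wished to assume only a \emph{local} maximum at $x_M$ one could first use $g'\le m$ (via Lemma~\ref{bound} or a direct trapping-region argument) to confine any periodic orbit to a fixed bounded box and then replace $h(x_M)$ by the supremum of $h$ over that box.
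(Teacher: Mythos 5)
Your argument is correct, and it takes a genuinely different route from the paper's. The paper proves Proposition~\ref{subcritDecay} by a Dulac-type divergence argument: since $\nabla\cdot G=F'(x)$, it rules out periodic orbits contained entirely in either half-plane, and for an orbit $\Gamma$ crossing $x=0$ it bounds the positive contribution of $\int\!\!\int_D F'(x)\,dx\,dy$ over $D\cap\{x\ge 0\}$ by an enclosing rectangle of width $x_M$, and bounds the negative contribution from below in magnitude by integrating over a triangle $D^*\subset D\cap\{x<0\}$ whose area grows like $\tfrac{|\lambda|}{2}B(\lambda)$; for $|\lambda|$ large the negative part dominates, contradicting $\oint_\Gamma(n\cdot G)\,ds=0$. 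You instead construct the $\epsilon$-weighted energy $E$ centered at $p_\lambda$ and show $\dot{E}=\epsilon(x-\lambda)(F(x)-g(\lambda))\le 0$ globally once $g(\lambda)>h(x_M)$, with equality only on $\{x=\lambda\}$; a periodic orbit would force $\dot E\equiv 0$, which is impossible. Your route is shorter and yields strictly more: it gives an explicit threshold $K=h(x_M)/|m|$ and, via LaSalle, global asymptotic stability of $p_\lambda$ for $\lambda<-K$, neither of which the paper's argument delivers. Both proofs lean on the same implicit structural fact about the `2'-shaped critical manifold---that $h$ does not exceed $h(x_M)$ for $x\ge 0$ (the paper uses the equivalent $h'(x)\le 0$ for $x>x_M$ when it confines the positive-divergence region to the rectangle $R(\lambda)$)---and you correctly flag this and sketch how to relax it to a merely local maximum by first confining any periodic orbit to a bounded region. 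One cosmetic caveat: the unique equilibrium is $p_\lambda=(\lambda,F(\lambda))$, which equals $(\lambda,g(\lambda))$ precisely because $\lambda<0$; it is worth stating that explicitly since your construction depends on it.
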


\begin{proof}
The idea of the proof amounts to using a variation of Dulac's criterion \cite{glendinning1994stability} for the non-existence of periodic orbits.  Define 
$$G (x,y) = ( F(x) - y, \epsilon( x - \lambda)), $$
so $G(x,y)$ is the vector field.  We will need to use the divergence $\nabla \cdot G$ often, and direct computation shows
$$\nabla \cdot G = F'(x).$$

First, if $\lambda < 0$, the only equilibrium lies in the left half-plane.  Since any periodic orbit of a planar system must encircle an equilibrium, there can be no periodic orbits entirely contained in the set $x \geq 0$.

Second, there can be no periodic orbits entirely contained in the left-half plane.  We will show this by contradiction.  Suppose there is a periodic orbit $\Gamma$, and define $D$ to be the region enclosed by $\Gamma$.  Then
$$ \nabla \cdot G \leq m < 0 \hspace{0.5in} \text{ for all } x < 0.$$
Therefore,
$$ \int \int_D \nabla \cdot G dx dy <  0. $$
By the divergence theorem $$\int_\Gamma (n \cdot F) ds = \int \int_D \nabla \cdot F dx dy.$$
However, $\Gamma$ is a trajectory, so
$$ \int_\Gamma (n \cdot G) ds = 0,$$ and we have a contradiction.

We will show that there are no periodic orbits that cross $x = 0$ in a similar way.  Suppose there is a periodic orbit which crosses $x=0$.  Then it must do so twice; let $p_1 = (0,y_1)$, $p_2 = (0,y_2)$ with $y_1(\lambda) > 0 > y_2 (\lambda) $ be the points where $\Gamma$ intersects the $y$-axis.  Also, define $B (\lambda) = y_1 (\lambda) - y_2 (\lambda)$.  Note that $B(\lambda)$ is the maximum vertical amplitude of the periodic orbit in the region $x \geq 0$.  Let $k$ be the maximum slope of $h(x)$ on the interval $0 \leq x \leq x_M.$  Since $F'(x) \geq 0$ on the set $x \in [0, x_M]$, we know
$$ \int \int_{D \cap \{ x \geq 0 \} } \nabla \cdot G dx dy \leq \int \int_{R(\lambda)} \nabla \cdot G dx dy \leq k x_M B(\lambda),$$
where $R(\lambda)$ is the rectangle with $B(\lambda)$ forming the left side and having width $x_M.$
Next, let $p_0 = (x_0,y_0)$ be the point where $\Gamma$ intersects the fast nullcline $y = g(x)$, for $x_0 < \lambda$.  Define $B_1(\lambda)$ to be the line segment connecting $p_0$ to $p_1$ and $B_2 (\lambda)$ to be the line segment connecting $p_0$ to $p_2$.  Then $B_1$ and $B_2$ have constant slope (for fixed $\lambda$).  Let $D^*(\lambda)$ be the interior of the triangle enclosed by $B(\lambda),$ $B_1(\lambda)$, and $B_2(\lambda)$.  Then $D^*$ must lie entirely inside $D$.  Along $B_1$ near $p_1$, the vector field must point outside of $D^*$.  Since the slopes of the vectors are monotonically decreasing along $B_1$, if $\Gamma$ were ever to cross $B_1$ somewhere other than the endpoints, $\Gamma$ would be trapped inside $D^*$.  This would contradict that $p_0$ lies on $\Gamma.$  A similar argument in reverse time shows that $\Gamma$ cannot cross $B_2$.  Figure \ref{fig:dulac} shows the sets $D^*(\lambda)$ and $R(\lambda)$.

\begin{figure}[t]
	\begin{center}
		\includegraphics[width=0.5\textwidth]{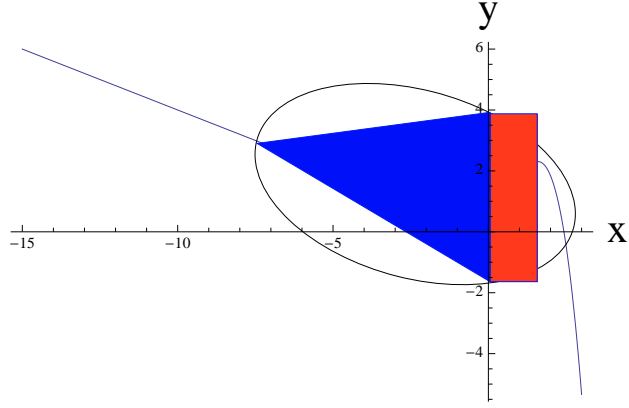}
			\caption{Important sets for the proof of Proposition \ref{subcritDecay}.  $\nabla \cdot G < 0$ is negative on the interior of the triangle $D^*(\lambda)$ (blue).   $\left. \nabla \cdot G\right|_D > 0$ on a region bounded by the rectangle $R(\lambda)$ (red).  }
			\label{fig:dulac}
	\end{center}
\end{figure}

If we let $A(\lambda)$ be the area of the region $D^*(\lambda)$, then
$$ A(\lambda) > \frac{ | \lambda |}{2} B(\lambda).$$
Since $g'(x) \leq m < 0 $, we have 
$$\int \int_{D \cap \{ x < 0 \} } \nabla \cdot G dx dy < \int \int_{D^*} \nabla \cdot G dx dy < m \frac{\lambda}{2} B(\lambda).$$
Thus, if $\lambda < 2 x_M \frac{k}{m}$, we can conclude
$$\int \int_D G dx dy < 0.$$  Therefore, by the divergence theorem, $\Gamma$ cannot be a periodic orbit.
\end{proof}

\section{Application in Ocean Circulation}
One might wonder if there are physical systems in which knowledge of canard orbits at a corner is useful.  We present here a variation of Stommel's thermohaline circulation model \cite{stommel}.  Stommel's original model from 1961 contains a '2'-shaped bifurcation curve producing two saddle-node bifurcations: one at a corner and one a fold.  Incorporating a parameter into the model as a slow state variable transforms the model into a fast/slow system in the form of \eqref{genIntro}.  It seems likely that systems with similar nonsmooth saddle-node bifurcations could be approached in the same way, where some combination of Theorem \ref{fold}, Theorem \ref{corner}, and Corollary \ref{corner2} apply.  


In \cite{kg11} it is shown that the dimensionless Stommel model reduces to the simple nonsmooth system
	\begin{equation}
		\label{red1}
		\dot{y} = \mu - y - K | 1- y | y,
	\end{equation}
which has equilibria at
	\begin{equation}
		\label{crit1}
		\mu = \left\{ \begin{array}{ll}
			(1 + K) y - K y^2 & \text{for } y < 1 \\
			(1 - K) y + K y^2 & \text{for } y > 1
			\end{array} \right.
	\end{equation}
The nature of the system depends on $K$, as seen in Figure \ref{fig:As}. 

 \begin{figure}[b]
        \centering
        \begin{subfigure}[b]{0.45\textwidth}
		\includegraphics[width=\textwidth]{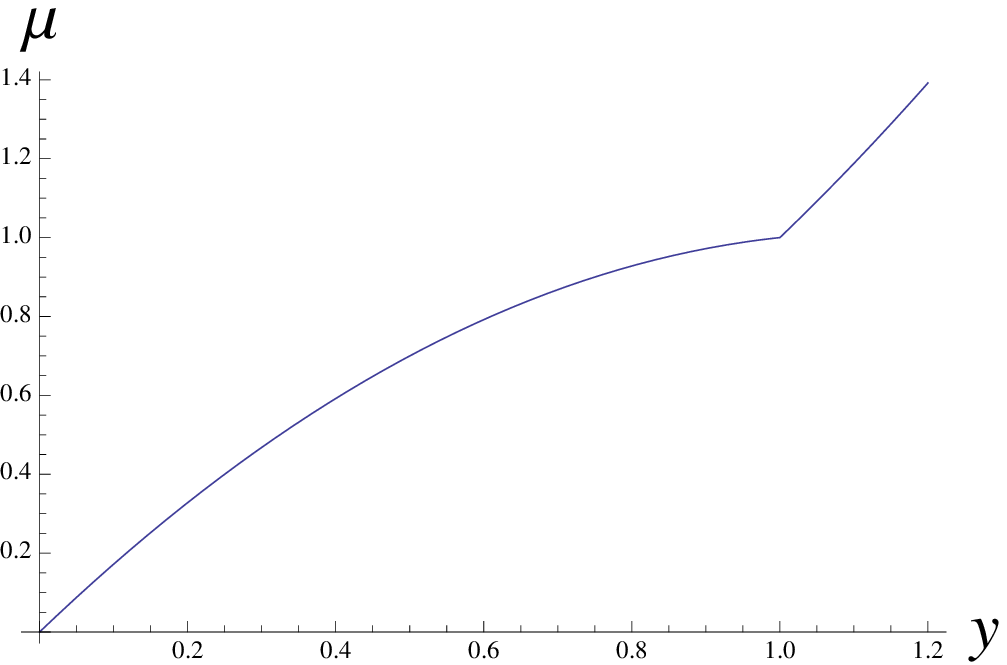} 
		\caption{$K < 1$}      
		 \label{fig:smallA}
	\end{subfigure}
        ~ 
        \begin{subfigure}[b]{0.45\textwidth}
		\includegraphics[width=\textwidth]{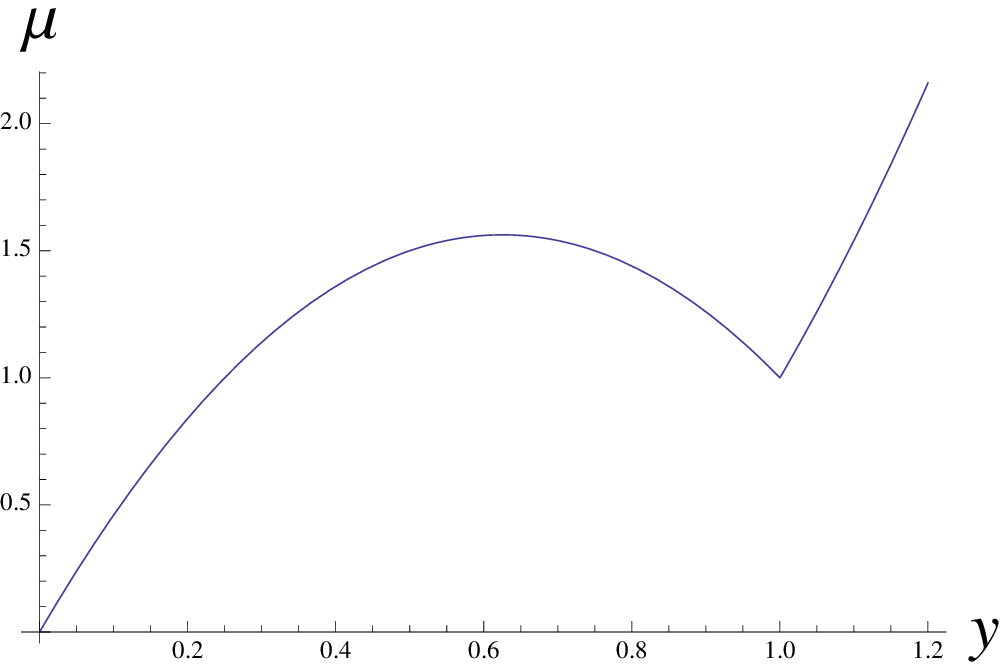}
		\caption{$ K > 1$}
		 \label{fig:bigA}
        \end{subfigure}
        \caption{Equilibria of \eqref{red1} for different values of $\mu$ in the cases (a) $K < 1$ and (b) $K >1$. }
        \label{fig:As}
\end{figure}
 
 If $K<1$, then $d\mu/dy>0$ for all $y$ except $y=1$ where the derivative is discontinuous.  Thus, for $K < 1 $ the curve of equilibria $\mu = \mu(y)$ is monotone increasing.  The system \eqref{red1} has a unique equilibrium solution.  The equilibrium is globally attracting, and it is important to remember that the solution corresponds to a unique stable circulation state (i.e., direction and strength).  However, if $K >1$ the system exhibits bistability for a range of $\mu$ values.  While $\mu(y)$ is still monotone increasing for $y > 1$, the curve has a local maximum at $ y =(1+K)/(2K) < 1$.  Thus for 
 $$\displaystyle{1 < \mu < \frac{(1+K)^2}{4K} } $$ 
 there are three equilibria.  The system is bistable with the outer two equilibria being stable, and the middle equilibrium being unstable.  Figure \ref{bifurcation} shows the bifurcation diagram for $K>1$.
 
	\begin{figure}[t]
		\begin{center}
			\includegraphics[width=0.4\textwidth]{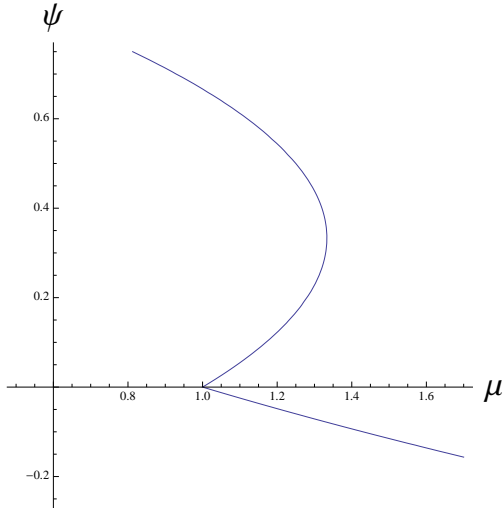}
		\caption{Bifurcation diagram for \eqref{red1}.  $\psi = 1-y.$}
		\label{bifurcation}
		\end{center}
	\end{figure}

It is hypothesized that the hysteresis loop implicitly appearing in Figure \ref{bifurcation} is important to understanding variability in paleoclimate data, specifically with regard to rapid warming events in the North Atlantic \cite{paleoclimate, dijkstraghil, thsaha}.  The bifurcation diagram suggests $\mu$ is the key to generating oscillatory behavior (when $K>1$), so we include $\mu$ as a dynamic variable.  We assume that $\mu$ evolves according to long-term average cloud formation, and that $y$ affects this process in a negative way.  Also, since $\mu$ was a parameter of the physical system, it is assumed to evolve on a longer time scale than $y$.  Thus it makes sense to consider the system
\begin{equation}
		\label{red2}
		\begin{array}{c}
			\dot{y} =  \mu - y - K | 1 - y | y \\
			\dot{\mu} =  \epsilon (\lambda - y),
		\end{array}
\end{equation}
where $K > 1$.  Here, we will look at this model as it relates to Theorem \ref{corner}.  For a more complete discussion of this model, see \cite{me2}.
	
The critical manifold of \eqref{red2} is precisely the set of equilibria of \eqref{red1} depicted in Figure \ref{fig:bigA}.  As $\lambda$ decreases through 1, the system undergoes a nonsmooth Hopf bifurcation as outlined by Theorem \ref{corner}.  That is, if $K < 1 +  2\sqrt{\epsilon}$ canard cycles appear at the corner.  However, if $K > 1 + 2\sqrt{\epsilon}$ the system undergoes a super-explosion.  In either case, the slope of the right stable branch is greater than 1, so the bifurcation is supercritical.

In the context of the ocean model \eqref{red2}, a stable equilibrium indicates a stable circulation state (direction and strength).  When $y < 1$ the ocean transports water poleward along the surface (sometimes called a `positive' circulation), and when $y > 1$ water is transported from the pole to the equator along the surface (called a `negative' circulation).  The bifurcation that occurs as $\lambda$ decreases through 1 indicates a change in the system from a stable negative circulation state to a periodically reversing circulation.  If canard cycles are created through the bifurcation, then, near the bifurcation point, the circulation will oscillate between weak circulation states in each direction.  Under a super-explosion, however, the circulation will oscillate between a weak negative (equatorward) circulation and a strong positive (poleward) circulation.  Thus, immediately after bifurcation, the relaxation oscillation created by the super-explosion is reminiscent of the desired hysteresis loop--a contrast to the headless canard cycles that will not reach the left stable branch of the critical manifold corresponding to a strong poleward circulation.

\section{Discussion}
In this paper, we have demonstrated the existence of canard cycles in planar nonsmooth fast/slow systems with a piecewise-smooth `2'-shaped critical manifold, pushing the theory beyond piecewise linear systems.  Through comparison with smooth shadow systems, we have shown that the amplitudes of canard cycles in nonsmooth systems are bounded by the amplitudes of canard cycles in corresponding smooth systems.  As we see in Figure \ref{fig:canSup}, it is possible for a corner to produce canards with head, and there is a delay between the bifurcation and the explosion phase.  This is a contrast to the piecewise-linear case, where the quasi-canards  are unable to produce the variety with heads, and the explosion phase begins immediately upon bifurcation.  In this respect, it is possible for a nonlinear, piecewise-smooth system to exhibit canards that are closer to their smooth cousins.  

On the other hand, the splitting line is essential for super-explosion. The instantaneous transition from a globally attracting equilibrium point to relaxation oscillations is a product of the nonsmooth nature of the system.  Whether the bifurcation produces canard cycles or causes an instantaneous jump to relaxation oscillations is determined by the slope of the repelling branch of the critical manifold at the splitting line; the slope of the attracting branch (relative to that of the repelling branch) only serves to determine the criticality of the bifurcation.  These slopes indicate the degree to which there is a local blending of time scales between the fast and slow variables.  Canard cycles require a local time-scale blending--at least on the side of splitting line containing the repelling branch of the critical manifold.  The possibility of a strong time-scale separation at a corner suggests that it is somehow easier for a nonsmooth system to exhibit relaxation oscillations, which is made apparent through super-explosions.

This paper does not put to bed entirely the theory of canards in piecewise-smooth planar systems.  We considered specifically the case where the splitting line and slow nullcline were both vertical lines (i.e., orthogonal to the fast direction) ensuring that the 'Hopf-like' bifurcations occur precisely at a fold or corner.  If the slow nullcline were to depend on the slow variable $y$, the bifurcation point no longer occurs at a fold or corner, complicating the analysis.    

\section*{Acknowledgments}
This research was made possible by the support of the Mathematics and Climate Research Network through a SAVI supplement allowing the first author to visit the University of Manchester.  The first author also thanks the University of Manchester for its hospitality during that visit.  The research was supported by the NSF under grants DMS-0940363 and DMS-1239013.


\providecommand{\bysame}{\leavevmode\hbox to3em{\hrulefill}\thinspace}
\providecommand{\MR}{\relax\ifhmode\unskip\space\fi MR }
\providecommand{\MRhref}[2]{%
  \href{http://www.ams.org/mathscinet-getitem?mr=#1}{#2}
}
\providecommand{\href}[2]{#2}

\end{document}